\newtheorem{theorem}{Theorem}[section]
\newtheorem{corollary}{Corollary}[section]
\newtheorem{definition}{Definition}[section]
\newtheorem{lemma}{Lemma}[section]
\newtheorem{problem}[theorem]{Problem}
\begin{document}

\title{Growth rate of an endomorphism of a group}

\author {Kenneth J. Falconer\\
Mathematical Institute\\
University of St Andrews\\Fife KY169SS Scotland
\and
Benjamin Fine\\
Department of Mathematics\\
Fairfield University\\Fairfield, Connecticut 06430,
United States
\and
Delaram Kahrobaei\\
Doctoral Program in Computer Science\\
CUNY Graduate Center,
City University of New York\\New York,NY 10016,
United States\\
Mathematics Department\\
New York City College of Technology, CUNY}
\date{ }

\maketitle

\begin{abstract}
In [B] Bowen defined the growth rate of an endomorphism of a finitely generated group
and related it to the entropy of a map $f:M \mapsto M$ on a compact manifold. In this note we study the purely group theoretic aspects of the growth rate of an endomorphism of a finitely generated group. We show that it is finite and bounded by the maximum length of the image of a generator. An equivalent formulation is given that ties the growth rate of an endomorphism to an increasing chain of subgroups.  We then consider the relationship between growth rate of an endomorphism on a whole group and the growth rate restricted to a subgroup or considered on a quotient.We use these results to compute the growth rates on direct and semidirect products. We then calculate the growth rate of endomorphisms on several different classes of groups including abelian and nilpotent.
\end{abstract}

\section{Introduction}

In [B] Bowen defined the growth rate of an endomorphism of a finitely generated group
and related it to the entropy of a map $f:M \mapsto M$ on a compact manifold.  In particular
he showed that if $f:M \mapsto M$ is a map of a compact manifold and $h(f)$ is its entropy then
$h(f) \ge \log \mu$ where $\mu$ is the growth rate of $f_\star$ on the fundamental group $\pi_1(M)$.

In this paper we consider the purely group theoretic aspects of the growth rate of an endomorphism of
a finitely generated group.  As pointed out in [MS] very little is known about the metric properties of an
endomorphism of a finitely generated group except in the case of a free group. Our aim in this study is to elaborate on and clarify the ideas in Bowen's paper and to present several new results.  In [MS] several different characteristics of the dynamics of an automorphism of a free group were introduced.

In the next section we introduce Bowen's growth rate of an endomorphism and prove several of the statements mentioned in that note. We  show that the growth rate is finite and bounded by the maximum length of the image of a generator. We then provide an equivalent formulation of growth rate that ties the growth rate of an endomorphism on a group $G$ to an increasing chain of subgroups. In section 3 we consider the relationship between growth rate of an endomorphism on a whole group and the growth rate restricted to a subgroup or considered on a quotient. Using these results we then calculate the growth rate of endomorphisms on several different classes of groups including abelian and nilpotent.  We also correct a mistake in a statement in [B]. These results are also used to compute the growth rates on direct and semidirect products.

\section{Growth Rate of an Endomorphism }

In this section we consider the notion of growth rate of an
endomorphism of a finitely generated group in the sense of Bowen
([B], also [H] page 182). In particular we prove
several of the results discussed but not proved in the above
mentioned note.

\begin{definition} Let $\Gamma$ be a finitely generated group with
generating set $S=\{s_1, s_2, \cdots, s_n\}$; for an element
$\gamma \in \Gamma$ we define the length of the shortest word in
the letters $S \cup S^{-1}$ which represents $\gamma$ denoted by
$|\gamma|_S$. Let $\alpha: \Gamma \rightarrow \Gamma$ be an
endomorphism of $\Gamma$. Now define the growth rate of $\alpha$,
denoted by $GR(\alpha)$, as follows:
$$GR(\alpha) = \sup_{\gamma \in \Gamma} \{ \limsup_{m \rightarrow
\infty} \sqrt[m]{|\alpha^m \gamma|}\}$$
\end{definition}

Our first result shows that the growth rate is finite and that it suffices to consider growth on a generating system.

\begin{theorem} \label{generalised}
Let $\Gamma$ be a finitely generated group generated by $S= \{s_1,
\cdots, s_n\}$ and suppose that $\alpha: \Gamma \rightarrow \Gamma$ is an
endomorphism of $\Gamma$. Then:

$\hphantom{xx}$ (1) $GR(\alpha) = \lim_{m \rightarrow \infty} \sqrt[m]{K_m} =
\inf_{m} \sqrt[m]{K_m}$
where
$$K_m = \max_{1 \leq i \leq n} |\alpha^m s_i|.$$

$\hphantom{xx}$ (2) $GR(\alpha) \le k$ where $k$ is the maximum length of $\alpha(s_i)$ for $1 \le i \le n$.

$\hphantom{xx}$ (3) $GR(\alpha^n) = {GR(\alpha)}^n$ for $ n \geq 1$.
\end{theorem}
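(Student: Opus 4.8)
The plan is to make everything rest on the submultiplicativity of the sequence $(K_m)$. First I would verify that $K_{m+l}\le K_m K_l$: writing $\alpha^{m+l}s_i=\alpha^m(\alpha^l s_i)$ and expressing $\alpha^l s_i$ as a word of length at most $K_l$ in $S\cup S^{-1}$, application of $\alpha^m$ presents $\alpha^{m+l}s_i$ as a product of at most $K_l$ factors, each equal to $\alpha^m(s_j)^{\pm1}$ and so of length at most $K_m$. Hence $(\log K_m)$ is subadditive, and Fekete's lemma gives that $\lim_{m\to\infty}\sqrt[m]{K_m}$ exists and equals $\inf_m\sqrt[m]{K_m}$. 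Taking $l=1$ and iterating also gives $K_m\le k^m$, so this common value is at most $k$; once (1) is proved this yields (2). (If $K_m=0$ for some $m$ --- equivalently some power of $\alpha$ is the trivial endomorphism --- then every quantity in the statement is $0$ and there is nothing to prove, so one may assume $K_m>0$ throughout when invoking Fekete.)

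Next I would identify $\lim_m\sqrt[m]{K_m}$ with $GR(\alpha)$. For the inequality ``$\le$'', fix $\gamma\in\Gamma$ and write it as a word of length $L=|\gamma|_S$ in $S\cup S^{-1}$; the estimate above gives $|\alpha^m\gamma|\le L\,K_m$, hence $\sqrt[m]{|\alpha^m\gamma|}\le\sqrt[m]{L}\,\sqrt[m]{K_m}$, and since $\sqrt[m]{L}\to1$ we obtain $\limsup_m\sqrt[m]{|\alpha^m\gamma|}\le\lim_m\sqrt[m]{K_m}$; taking the supremum over $\gamma$ finishes this direction. For ``$\ge$'', I would use that for a finite family of bounded sequences the $\limsup$ of the pointwise maximum equals the maximum of the $\limsup$s, so
$$\lim_m\sqrt[m]{K_m}=\limsup_m\max_{1\le i\le n}\sqrt[m]{|\alpha^m s_i|}=\max_{1\le i\le n}\limsup_m\sqrt[m]{|\alpha^m s_i|}\le GR(\alpha),$$
since each $s_i$ is among the elements over which the supremum defining $GR(\alpha)$ ranges. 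This proves (1), and with the bound from the first paragraph, (2).

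For (3) I would apply (1) to the endomorphism $\alpha^n$, using the same generating set $S$: since $(\alpha^n)^m=\alpha^{nm}$ we get $GR(\alpha^n)=\lim_m\sqrt[m]{K_{nm}}=\lim_m\bigl(K_{nm}^{1/(nm)}\bigr)^n$. But $\bigl(K_{nm}^{1/(nm)}\bigr)_{m\ge1}$ is a subsequence of $\bigl(K_j^{1/j}\bigr)_{j\ge1}$, which by (1) converges to $GR(\alpha)$, so the subsequence converges to the same limit; therefore $GR(\alpha^n)=GR(\alpha)^n$.

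The argument is soft throughout; the one point needing care is the interchange of $\limsup$ with the finite maximum and, more substantively, checking that the supremum over all of $\Gamma$ in the definition is matched from both sides. The submultiplicativity estimate is precisely what makes the extra factor $L=|\gamma|_S$ vanish under the $m$-th root, so that passing from a generating set to arbitrary elements of $\Gamma$ costs nothing; this is the heart of part (1).
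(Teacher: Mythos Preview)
Your proof is correct and follows essentially the same approach as the paper: submultiplicativity of $(K_m)$ (the paper phrases it as $K(\alpha\circ\beta)\le K(\alpha)K(\beta)$) together with Fekete's lemma for (1), the bound $K_m\le k^m$ for (2), and the subsequence argument $K_{nm}^{1/(nm)}\to GR(\alpha)$ for (3). If anything, you are more thorough than the paper in explicitly verifying both inequalities between $\lim_m K_m^{1/m}$ and the supremum definition of $GR(\alpha)$, a step the paper leaves implicit.
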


\begin{proof} (1) Let $\Gamma = gp(S)$ where $|S|<\infty$, and $\alpha, \beta:
\Gamma \rightarrow \Gamma$ be two endomorphisms of $\Gamma$.
If $K(\alpha) = \max_{(\gamma \in S \cup S^{-1})} |\alpha(\gamma)|$ and $K(\beta)$ is defined analogously then it is straightforward that
$$K(\alpha \circ \beta) \leq K(\alpha) \cdot K(\beta).$$

In particular then it would follow that $K(\alpha^{n+m}) \leq K(\alpha^n) \cdot K(\beta^m)$
and so
$$\lim_{n \rightarrow \infty} {K(\alpha^n)}^{1/n} = \inf
{K(\alpha^n)}^{1/n}$$ exists and equals $K_m$.

(2) Using part (1) we can prove part (2) that the growth rate of an endomorphism $\alpha$ is bounded by the maximum length of $\alpha$ on a generating system.

For each $i = 1,...,n$ let $k_i = |\alpha(s_i)|$ and let $k$ be the maximum value of $k_i$ .  Each $\alpha(s_i)$ can be expressed as a minimal word on $\{s_1,...,s_n\}$ say $W_i(s_1,...,s_n)$ of length $k_i$.  Since $\alpha$ is an endomorphism we have
$$\alpha(W_i(s_1,...,s_n)) = W_i(\alpha(s_1),...,\alpha(s_n))$$
and then suitably reduced.  Consider then
$$ \alpha^2(s_i) = \alpha(W_i(s_1,...,s_n)) = W_i(\alpha(s_1),...,\alpha(s_n)).$$
It follows that $|\alpha^2(s_i)| \le k_i k \le k^2$ and so Inductively then
$$\alpha^m(s_i) \le k^m.$$
Let $K_m = \max_{1\le i \le n} |\alpha^m(s_i)|$.  Then $K_m \le k^m$.  From part (1)
$$ GR(\alpha) = \lim_{m \to \infty} (K_m)^{\frac{1}{m}} \le \lim_{m \to \infty} (k^m)^{\frac{1}{m}} = k$$
establishing (2).

The same type of argument proves part (3). We have first
$\lim_{n \rightarrow \infty} {K(\alpha^n)}^{1/n} = \inf
{K(\alpha^n)}^{1/n}$.
Fix $m$, and let $n=r \cdot m$. Then

$$GR(\alpha) = \lim_{r \rightarrow \infty} {K(\alpha^{r \cdot
m})}^{1/{r \cdot m}} = \lim_{r \rightarrow \infty} [K((\alpha^m)^r)^{1/r}]^{1/m} = GR(\alpha^m)^{1/m}.$$
\end{proof}

We close this section by presenting an equivalent formulation of growth rate of an endomorphism which ties the growth rate of an endomorphism on a group $G$ to an increasing chain of subgroups of $G$.

\begin{definition}\label{falconer}
Let $\Gamma$ be a group and $\alpha: \Gamma \rightarrow \Gamma$ be
an endomorphism of $\Gamma$. Given $r \in \mathbb{R}$, fix
$\alpha$ for every $r >1$. Define
$$H_r = \{\gamma \in \Gamma: \overline{\lim}_{n \rightarrow \infty}
\sqrt[n]{|\alpha^n (\gamma)|} \leq r\}.$$
\end{definition}

The following lemma is straightforward.

\begin{lemma} The growth rate of $\alpha$ is the smallest value of $r$ such that
$H_r = \Gamma$.
\end{lemma}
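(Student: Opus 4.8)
The plan is to show the two inclusions that together identify $GR(\alpha)$ as the least $r$ with $H_r=\Gamma$. First I would observe that the family $\{H_r\}_{r>1}$ is nested: if $r\le r'$ then $H_r\subseteq H_{r'}$, directly from the defining inequality $\overline{\lim}_n\sqrt[n]{|\alpha^n\gamma|}\le r$. So it makes sense to ask for the infimum of those $r$ for which $H_r$ exhausts $\Gamma$.

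Next, set $\rho=GR(\alpha)=\sup_{\gamma\in\Gamma}\overline{\lim}_n\sqrt[n]{|\alpha^n\gamma|}$, which is finite by Theorem~\ref{generalised}. For the easy direction: if $r\ge\rho$, then for every $\gamma\in\Gamma$ we have $\overline{\lim}_n\sqrt[n]{|\alpha^n\gamma|}\le\rho\le r$, so $\gamma\in H_r$; hence $H_r=\Gamma$ for all $r\ge\rho$, and in particular the set of $r$ with $H_r=\Gamma$ is nonempty and bounded below by — well, we still must check it is not achieved below $\rho$. For that converse direction, suppose $r<\rho$. By definition of the supremum there is some $\gamma_0\in\Gamma$ with $\overline{\lim}_n\sqrt[n]{|\alpha^n\gamma_0|}>r$, so $\gamma_0\notin H_r$ and $H_r\neq\Gamma$. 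Combining, the set $\{r>1:H_r=\Gamma\}$ is exactly $[\rho,\infty)$ (intersected with $(1,\infty)$), whose smallest element is $\rho=GR(\alpha)$.

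One small point to attend to is the phrasing ``smallest value of $r$'': since $H_\rho=\Gamma$ (by the easy direction applied with $r=\rho$), the infimum is actually attained, so ``smallest'' is justified rather than merely ``infimum''. The only mild subtlety — and the one place the argument could be said to have any content — is making sure the supremum in the definition of $GR(\alpha)$ is compatible with the $\limsup$ inside $H_r$; but this is immediate because both refer to the same quantity $\overline{\lim}_n\sqrt[n]{|\alpha^n\gamma|}$ attached to each $\gamma$, with $GR(\alpha)$ its supremum over $\gamma$ and $H_r$ the sublevel set of that same function of $\gamma$. I do not expect any real obstacle here; the lemma is essentially the observation that the supremum of a function equals the least upper level at which its sublevel set becomes everything.
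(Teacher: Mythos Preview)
Your argument is correct and is exactly the straightforward unpacking of the definitions that the paper has in mind; indeed the paper offers no proof at all, merely declaring the lemma ``straightforward'', and your proof is the natural one-line observation that $GR(\alpha)$ is by definition the supremum of the function $\gamma\mapsto\overline{\lim}_n|\alpha^n\gamma|^{1/n}$ while $H_r$ is its sublevel set at height $r$.
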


The sets $H_r$ form an increasing chain of subgroups.

\begin{lemma} $H_r$ is a subgroup of $\Gamma$ (for all
$r>1$).
\end{lemma}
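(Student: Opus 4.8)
The plan is to verify directly that $H_r$ satisfies the subgroup axioms: it contains the identity, is closed under inverses, and is closed under products. The first two are essentially immediate, and all the content sits in the product case, which reduces to an elementary fact about $n$-th roots of sums. Since $\alpha^n(e)=e$ for all $n$, we have $|\alpha^n(e)|=0$, so $\overline{\lim}_n\sqrt[n]{|\alpha^n(e)|}=0\le r$ and $e\in H_r$; in particular $H_r\ne\emptyset$. For inverses, recall that word length with respect to the symmetric generating set $S\cup S^{-1}$ satisfies $|g^{-1}|_S=|g|_S$ (reverse a minimal word and invert each letter). Hence $|\alpha^n(\gamma^{-1})|=|(\alpha^n\gamma)^{-1}|=|\alpha^n\gamma|$ for every $n$, so $\gamma\in H_r$ if and only if $\gamma^{-1}\in H_r$.

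For products, suppose $\gamma_1,\gamma_2\in H_r$ and put $a_n=|\alpha^n(\gamma_1)|$, $b_n=|\alpha^n(\gamma_2)|$. Subadditivity of word length gives
$$|\alpha^n(\gamma_1\gamma_2)|=|\alpha^n(\gamma_1)\,\alpha^n(\gamma_2)|\le a_n+b_n\le 2\max(a_n,b_n),$$
and therefore, since $t\mapsto t^{1/n}$ is increasing,
$$\sqrt[n]{|\alpha^n(\gamma_1\gamma_2)|}\le 2^{1/n}\max\bigl(\sqrt[n]{a_n},\sqrt[n]{b_n}\bigr).$$
Now $2^{1/n}\to 1$, and for any two real sequences one has $\limsup_n\max(x_n,y_n)=\max(\limsup_n x_n,\limsup_n y_n)$. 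Taking $\overline{\lim}$ in the displayed inequality thus yields
$$\overline{\lim}_{n\to\infty}\sqrt[n]{|\alpha^n(\gamma_1\gamma_2)|}\le\max\Bigl(\overline{\lim}_{n}\sqrt[n]{a_n},\ \overline{\lim}_{n}\sqrt[n]{b_n}\Bigr)\le r,$$
so $\gamma_1\gamma_2\in H_r$. Combined with the identity and inverse cases, this proves $H_r\le\Gamma$.

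The only point requiring a moment's care — and the step I would flag as the ``hard part,'' although it is still routine — is the passage from the subadditive bound $a_n+b_n$ to something that behaves well under extraction of $n$-th roots: the constant $2$ is harmless precisely because $2^{1/n}\to 1$, and one must invoke $\limsup\max=\max\limsup$ so that the (possibly distinct) exponential growth rates of the two factors do not combine. Nothing about $\alpha$ beyond its being an endomorphism is used, and the argument is uniform in $r$; the hypothesis $r>1$ plays no role here (it matters only later, where $H_r=\Gamma$ for $r$ large).
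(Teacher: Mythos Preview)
Your proof is correct and follows essentially the same route as the paper: subadditivity of word length gives $|\alpha^n(\gamma_1\gamma_2)|\le 2\max(|\alpha^n(\gamma_1)|,|\alpha^n(\gamma_2)|)$, and the factor $2^{1/n}\to 1$ is harmless under $\limsup$. The only cosmetic difference is that the paper unpacks the $\limsup$ via an explicit $\epsilon$-argument (bounding each $|\alpha^n(\gamma_i)|$ by $(r+\epsilon)^n$ for large $n$), whereas you invoke the identity $\limsup\max=\max\limsup$ directly; the paper also omits the identity-element check you include.
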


\begin{proof} Suppose that $\gamma_1, \gamma_2 \in H_r$ and let $\epsilon >0$.
Then if $n$ is sufficiently large
$$ |\alpha^n(\gamma_1)|^{1/n} \leq r + \epsilon \text{ and } |\alpha^n(\gamma_2)|^{1/n} \leq r + \epsilon.$$
Then
$$ |\alpha^n(\gamma_1 \gamma_2)| = |\alpha^n(\gamma_1)
\alpha^n(\gamma_2)| \leq |\alpha^n(\gamma_1)|
+|\alpha^n(\gamma_2)| \leq 2(r + \epsilon)^n$$
which implies that
$$ {|\alpha^n(\gamma_1 \gamma_2)|}^{1/n} \leq 2^{1/n} (r+
\epsilon).$$
Since $2^{1/n} \rightarrow 1$ so that $$
\overline{\lim}_{n \rightarrow \infty} {|\alpha^n(\gamma_1
\gamma_2)|}^{1/n} \leq r+ \epsilon \text { for all } \epsilon > 0.$$
Therefore $\gamma_1 \gamma_2 \in H_r$.
If $\gamma \in H_r$,
$|\alpha^n(\gamma^{-1})|=|\alpha^n(\gamma)^{-1}|$ and hence
$\gamma^{-1} \in H_r$.
\end{proof}

\section{Growth Rate on Subgroups and Quotients}

Suppose that $H$ is a finitely generated subgroup of a finitely generated group $G$ and that $\alpha$ is an endormorphism on $G$. If $\alpha(H) \subset H$ then $\alpha$ can be considered as an endomorphism also on $H$.  If this is the case we denote by $GR(\alpha|_H)$ the growth rate of $\alpha$ restricted to $H$.  If further $H$ is normal in $G$ with then $\alpha$ can be considered as an endomorphism on the quotient $G/H$.  We use $GR(\alpha \text{ on } G/H)$ to denote the growth rate of an endomorphism $\alpha$ on $G$ considered as an endomorphism on this quotient.

We now consider the relationship between these various growth rates. First we look at subgroups of finite index.

\begin{theorem} Suppose that $\Gamma$ is a group generated by $S= \{s_1,\cdots, s_n\}$, and $\alpha: \Gamma \rightarrow \Gamma$ is an
endomorphism of $\Gamma$. If $H \leq \Gamma$ is a subgroup of finite index in $\Gamma$ such that $\alpha(H) \subset H$, then
$GR(\alpha|_H) = GR(\alpha)$.
\end{theorem}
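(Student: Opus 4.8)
The plan is to prove the two inequalities $GR(\alpha|_H)\le GR(\alpha)$ and $GR(\alpha)\le GR(\alpha|_H)$ separately, after a preliminary reduction to convenient generating sets. Since $[\Gamma:H]<\infty$ and $\Gamma$ is finitely generated, the Reidemeister--Schreier process yields a finite generating set $T$ of $H$ (so $GR(\alpha|_H)$ even makes sense), and it moreover shows that any $h\in H$ written as a word of length $\ell$ in $S\cup S^{-1}$ rewrites to a word of length at most $\ell$ in $T\cup T^{-1}$, while each Schreier generator has $S$-length at most $c:=2L+1$ with $L=\max_i|t_i|_S$ for a chosen right transversal. Hence $|h|_T\le|h|_S\le c|h|_T$ for all $h\in H$. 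Combined with Theorem~\ref{generalised}(1) (independence of $GR$ from the finite generating set) and the fact that $c^{1/m}\to1$, this gives, for every $h\in H$, $\limsup_m|\alpha^m h|_T^{1/m}=\limsup_m|\alpha^m h|_S^{1/m}\le GR(\alpha)$; taking the supremum over $h\in H$ proves $GR(\alpha|_H)\le GR(\alpha)$. This direction is routine.

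The substance is the reverse inequality, and the principal difficulty is that a coset representative of $H$ in $\Gamma$ need not lie in $H$, so its $\alpha$-orbit is not directly controlled by $GR(\alpha|_H)$. To deal with this I would fix the right transversal $\{t_1=1,t_2,\dots,t_d\}$ and define a self-map $\pi\colon\{1,\dots,d\}\to\{1,\dots,d\}$ together with elements $h_1,\dots,h_d\in H$ by $\alpha(t_j)=h_j\,t_{\pi(j)}$ (note $\pi$ need only be a function, not a permutation, as $\alpha$ is not assumed surjective). A straightforward induction then gives
$$\alpha^m(t_i)=\Bigl(\prod_{k=0}^{m-1}\alpha^{m-1-k}\bigl(h_{\pi^k(i)}\bigr)\Bigr)\,t_{\pi^m(i)}.$$
For an arbitrary $\gamma\in\Gamma$, write $\gamma=\eta\,t_i$ with $\eta\in H$; then $\alpha^m(\gamma)=\alpha^m(\eta)\,\alpha^m(t_i)$, so by the triangle inequality
$$|\alpha^m(\gamma)|_S\;\le\;|\alpha^m(\eta)|_S\;+\;\sum_{j=0}^{m-1}\max_{1\le l\le d}|\alpha^{j}(h_l)|_S\;+\;L .$$
The key observation is that $\eta$ and all the $h_l$ lie in $H$, and there are only finitely many $h_l$, so by the first paragraph each of the quantities $|\alpha^m(\eta)|_S$ and $|\alpha^j(h_l)|_S$ grows with exponent at most $r:=GR(\alpha|_H)$.

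To conclude, fix $\varepsilon>0$ and choose $C$ with $|\alpha^m(\eta)|_S\le C(r+\varepsilon)^m$ and $\max_l|\alpha^m(h_l)|_S\le C(r+\varepsilon)^m$ for all $m\ge0$. Assuming $r\ge1$, the geometric sum $\sum_{j=0}^{m-1}(r+\varepsilon)^j$ is bounded by a constant multiple of $(r+\varepsilon)^{m-1}$, so the displayed bound gives $|\alpha^m(\gamma)|_S\le C'(r+\varepsilon)^m$ for large $m$, hence $\limsup_m|\alpha^m(\gamma)|_S^{1/m}\le r+\varepsilon$; letting $\varepsilon\to0$ and taking the supremum over $\gamma$ yields $GR(\alpha)\le GR(\alpha|_H)$. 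The step I expect to be the real obstacle, and the one deserving genuine care, is the degenerate case $r=GR(\alpha|_H)<1$: by Theorem~\ref{generalised}(1) this forces $\alpha^{m_0}|_H$ to be trivial for some $m_0$, in which case only the last $m_0$ factors of the expansion survive, $|\alpha^m(t_i)|_S$ stays bounded, and the argument yields merely $GR(\alpha)\le1$ rather than $GR(\alpha)\le r$; so one must either show this situation cannot arise under additional hypotheses (e.g. $\alpha$ injective, which forces $GR(\alpha|_H)\ge1$) or treat it separately. Apart from that, the only thing to verify carefully is the bookkeeping in the triangle-inequality bound above and the passage $c^{1/m}\to1$ used to identify the two word metrics on $H$ up to subexponential error.
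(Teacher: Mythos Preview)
Your argument is essentially the same as the paper's: write each coset representative as $\alpha(t_j)=h_j t_{\pi(j)}$, iterate to express $\alpha^m(t_j)$ as a product of $\alpha$-iterates of finitely many fixed elements of $H$ times a bounded tail, and sum the resulting geometric series using $K_m^{1/m}\to GR(\alpha|_H)$. The paper writes $\alpha(g_j)=g_{j_1}w_1$ with the representative on the left rather than the right, but the iteration and the final estimate $|\alpha^m(g_j)|\le 1+C+CK_1+\cdots+CK_{m-1}\le \text{const}\cdot(K+\epsilon)^m$ are the same as yours.

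You are in fact more careful than the paper on two points. First, the paper dismisses $GR(\alpha|_H)\le GR(\alpha)$ as ``clear'', whereas you correctly note that it requires the bi-Lipschitz comparison $|h|_T\le |h|_S\le c|h|_T$ between the intrinsic and ambient word metrics on $H$, which is exactly what Reidemeister--Schreier provides. Second, the degenerate case $r=GR(\alpha|_H)<1$ that you flag is a genuine issue the paper ignores: its final geometric-sum bound $\frac{Ca(K+\epsilon)^m}{1-1/(K+\epsilon)}$ tacitly assumes $K+\epsilon>1$. Your instinct that this case needs separate treatment is right; indeed, without further hypotheses one can have $GR(\alpha|_H)=0$ while $GR(\alpha)=1$ (take $\Gamma=\mathbb{Z}/2\times\mathbb{Z}/2$, $H=\langle a\rangle$, $\alpha(a)=e$, $\alpha(b)=b$), so the statement as written is only literally correct when $GR(\alpha|_H)\ge 1$, which is automatic if $\alpha|_H$ is not eventually trivial.
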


\begin{proof}
Let $H$ be a subgroup of finite index in $\Gamma$.  Since $\Gamma$ is finitely generated it follows that $H$ is also finitely generated.  Suppose then that $H= \langle h_1, h_2, \cdots, h_r \rangle$. Suppose that $\alpha: \Gamma \rightarrow \Gamma$ is an endomorphism of
$\Gamma$ such that $\alpha (H) \subset H$
 Clearly,
$$GR(\alpha|_H) \leq GR(\alpha).$$
Since $H$ is a subgroup of index $s$ in $\Gamma$, we have
$$\Gamma = g_1 H \cup \cdots \cup g_s H$$
where we suppose
$$\Gamma = gp(h_1, \cdots, h_r, g_1, \cdots, g_s).$$

Put
$$ K_m = \max_{h_i} |\alpha^m (h_i)| \text{ and }  C= \max_{g_i} \{|w|: \alpha(g_i) = g_k w\}$$
for some $k$ where $w$ is a reduced word on $\{h_1, \cdots, h_r \}$.

Then for any $g_j$ we have
$$ \alpha(g_j) = g_{j_1} w_1 \;(\text{for some } 1 \leq j_1 \leq s, \;\; w_1 \in H, \; |w_1| \leq C)$$
Iterating:
$$ \alpha^2 (g_j) = \alpha(g_{j_1}) \alpha(w_1) = g_{j_2} w_2 \alpha(w_1) \; (1 \leq j_2 \leq s, |w_2| \leq C)$$
$$ \alpha^3 (g_j) = \alpha(g_{j_2}) \alpha(w_2) \alpha^2 (w_1) = g_{j_3} w_3 \alpha(w_2) \alpha^2 (w_1)$$
$$ \vdots$$
$$ \alpha^m(g_j) = g_{j_m} w_m \alpha(w_{m-1}) \alpha^2 (w_{m-2})
\cdots \alpha^{m-1} (w_1)$$
Then
\begin{eqnarray*}
|\alpha^m (g_j)| \leq 1 + C + C K_1 + C K_2 +
\cdots + C K_{m-1}.
\end{eqnarray*}
Since
\begin{eqnarray*}
{K_m}^{1/m} \rightarrow K = GR(\alpha|_H).
\end{eqnarray*}
For all $\epsilon > 0$ there is $a$ such that
\begin{eqnarray*}
K_m \leq a (K + \epsilon)^m
\end{eqnarray*}
So
\begin{eqnarray*}
|\alpha^m(g_i)| &\leq& 1 + C + C a (K+\epsilon)+ C a (K +
\epsilon)^2 + \cdots + C a (K + \epsilon)^{m-1}\\
&\leq& \frac{Ca(K+\epsilon)^m}{1 - \frac{1}{K+\epsilon}}.
\end{eqnarray*}
\end{proof}

Clearly the growth rate on a quotient must be smaller. That is:

\begin{lemma} Let $H$ be a finitely generated normal subgroup of a finitely
generated group $\Gamma$, and $\alpha:\Gamma \rightarrow \Gamma$
be a group endomorphism such that $\alpha(H) \subset H$. Then
$$GR(\alpha \text{ on } {\Gamma}/H) \leq GR(\alpha)$$
\end{lemma}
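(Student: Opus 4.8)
The plan is to choose compatible generating sets for $\Gamma$ and for the quotient $Q = \Gamma/H$, show that the word length in $Q$ of any element never exceeds the word length in $\Gamma$ of any of its preimages, and then push this inequality through the $m$-th power iterate and the $m$-th root limit from Theorem~\ref{generalised}(1).

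\medskip

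First I would fix the generating set $S = \{s_1,\dots,s_n\}$ of $\Gamma$ and let $\bar S = \{\bar s_1,\dots,\bar s_n\}$ be its image under the canonical projection $\pi\colon \Gamma \to Q$; since $\pi$ is surjective, $\bar S$ generates $Q$, and we compute the growth rate of the induced endomorphism $\bar\alpha$ on $Q$ with respect to $\bar S$. The key elementary observation is that if $\gamma \in \Gamma$ is written as a word of length $\ell$ in $S \cup S^{-1}$, then applying $\pi$ letter by letter expresses $\pi(\gamma)$ as a word of length at most $\ell$ in $\bar S \cup \bar S^{-1}$ (possibly shorter, after reduction in $Q$). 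Hence $|\pi(\gamma)|_{\bar S} \leq |\gamma|_S$ for every $\gamma \in \Gamma$. Moreover $\pi$ intertwines the endomorphisms, $\pi \circ \alpha = \bar\alpha \circ \pi$, and therefore $\pi \circ \alpha^m = \bar\alpha^m \circ \pi$ for all $m \geq 1$.

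\medskip

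Next I would combine these two facts. Applying the length inequality to $\gamma = \alpha^m(s_i)$ gives
$$|\bar\alpha^m(\bar s_i)|_{\bar S} = |\pi(\alpha^m(s_i))|_{\bar S} \leq |\alpha^m(s_i)|_S$$
for each $i$, so if we set $K_m = \max_i |\alpha^m(s_i)|_S$ and $\bar K_m = \max_i |\bar\alpha^m(\bar s_i)|_{\bar S}$, then $\bar K_m \leq K_m$ for every $m$. By Theorem~\ref{generalised}(1) applied to both $\alpha$ on $\Gamma$ and $\bar\alpha$ on $Q$,
$$GR(\bar\alpha) = \lim_{m\to\infty} \bar K_m^{1/m} \leq \lim_{m\to\infty} K_m^{1/m} = GR(\alpha),$$
which is the claimed inequality $GR(\alpha \text{ on } \Gamma/H) \leq GR(\alpha)$.

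\medskip

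There is essentially no serious obstacle here; the only point requiring a moment's care is the consistency of the comparison, namely that Theorem~\ref{generalised}(1) is being invoked for $Q$ with respect to the \emph{specific} generating set $\bar S = \pi(S)$ rather than an arbitrary one — but this is harmless, since word length with respect to different finite generating sets of the same group differs only by bounded multiplicative constants, which disappear under the $m$-th root limit, so $GR(\bar\alpha)$ is independent of the chosen generating set. (One should also note in passing that $Q$ is finitely generated precisely because $\Gamma$ is, so the hypotheses of Theorem~\ref{generalised} genuinely apply to $Q$.)
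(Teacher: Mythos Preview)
Your proof is correct and follows essentially the same route as the paper: take the images of the generators of $\Gamma$ as generators of $\Gamma/H$, observe that $\alpha^n(Hg_i)=H\alpha^n(g_i)$ so that quotient word length is bounded by the word length upstairs, and let Theorem~\ref{generalised}(1) finish. The paper's version is extremely terse (it records only the intertwining identity and the conclusion), whereas you have made the length comparison and the appeal to Theorem~\ref{generalised}(1) explicit.
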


\begin{proof} Suppose that $\Gamma = gp(g_1, \cdots, g_k)$, so $\Gamma/H = gp(H
g_1, \cdots, H g_k)$. Then
$$\alpha^n(Hg_i)=H\alpha^n(g_i)$$
So $GR(\alpha \text{ on } {\Gamma}/H) \leq GR(\alpha)$.
\end{proof}

The next result shows that if $H$ is a normal subgroup then the growth rate of an endomorphism is bounded by the maximum of the growth rate on $H$ and the growth rate on the quotient $\Gamma/H$.

\begin{theorem} \label{normal}
Let $H$ be a finitely generated normal subgroup of a finitely
generated group $\Gamma$, and $\alpha:\Gamma \rightarrow \Gamma$
be a group endomorphism such that $\alpha(H) \subset H$. Then
$$GR(\alpha) \leq \max \{
GR(\alpha \text{ on } H), GR(\alpha \text{ on } {\Gamma}/H)\}.$$
\end{theorem}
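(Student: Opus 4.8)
The plan is to reduce the inequality to the fixed finite generating set and then read off the two halves of the extension $1\to H\to\Gamma\to\Gamma/H\to 1$ separately. Put $R=\max\{GR(\alpha\text{ on }H),\,GR(\alpha\text{ on }\Gamma/H)\}$, and let $\bar\alpha$ denote the endomorphism induced by $\alpha$ on $\Gamma/H$. By the preceding two lemmas, for each $r>1$ the set $H_r=\{\gamma:\limsup_{m\to\infty}\sqrt[m]{|\alpha^m\gamma|}\le r\}$ is a subgroup and $GR(\alpha)$ is the least $r$ with $H_r=\Gamma$; so it suffices to exhibit a generating set all of whose members lie in $H_r$ for every $r>R$. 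Fix generators $S=\{h_1,\dots,h_p,g_1,\dots,g_q\}$ with $H=gp(h_1,\dots,h_p)$ and $\bar g_1,\dots,\bar g_q$ generating $\Gamma/H$, and write $S_H=\{h_1,\dots,h_p\}$, $\bar S=\{\bar g_1,\dots,\bar g_q\}$.

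The generators of $H$ are immediate: since $\alpha(H)\subseteq H$, every $\alpha^m(h_i)$ is already a word in $S_H$, so $|\alpha^m(h_i)|_S\le|\alpha^m(h_i)|_{S_H}$ and therefore $\limsup_m\sqrt[m]{|\alpha^m(h_i)|_S}\le GR(\alpha\text{ on }H)\le R$, i.e. $h_i\in H_r$ for all $r>R$.

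For the generators $g_j$ I would factor $\alpha^m(g_j)$ through the extension. Choose for each $m$ a geodesic word $u_m$ in $g_1,\dots,g_q$ representing $\bar\alpha^m(\bar g_j)$ in $\Gamma/H$, so that $|u_m|_S\le|\bar\alpha^m(\bar g_j)|_{\bar S}$, and write $\alpha^m(g_j)=u_m h_m$ with $h_m\in H$. Applying $\alpha$ to this identity and comparing with $u_{m+1}h_{m+1}$ yields the recursion $h_{m+1}=e_{m+1}\,\alpha(h_m)$, where $e_{m+1}:=u_{m+1}^{-1}\alpha(u_m)$ lies in $H$ (both $u_{m+1}$ and $\alpha(u_m)$ cover $\bar\alpha^{m+1}(\bar g_j)$); unrolling gives $h_m=e_m\,\alpha(e_{m-1})\cdots\alpha^{m-1}(e_1)\,\alpha^m(h_0)$ with $h_0=u_0^{-1}g_j\in H$. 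Now $|e_i|_S\le|u_i|_S+k\,|u_{i-1}|_S$ where $k=\max_\ell|\alpha(g_\ell)|_S$, so Theorem~\ref{generalised}(1) applied to $\bar\alpha$ shows that the lengths $|u_i|_S$ and $|e_i|_S$ grow at rate at most $GR(\alpha\text{ on }\Gamma/H)\le R$. Using $|\alpha^m(g_j)|_S\le|u_m|_S+|h_m|_S$ and $|h_m|_S\le\sum_{i=1}^m|\alpha^{m-i}(e_i)|_S+|\alpha^m(h_0)|_S$, and remembering that the $e_i$ and $h_0$ lie in $H$ (where $\alpha$ grows at rate $\le GR(\alpha\text{ on }H)\le R$) while the $|e_i|_S$ themselves grow at rate $\le R$, one bounds each summand by a constant times $(R+\varepsilon)^m$, so $|h_m|_S\lesssim m\,(R+\varepsilon)^m$ and $\limsup_m\sqrt[m]{|\alpha^m(g_j)|_S}\le R+\varepsilon$ for every $\varepsilon>0$. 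Hence every generator lies in $H_r$ for $r>R$, so $H_r=\Gamma$ and $GR(\alpha)\le R$.

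The step I expect to fight with is the final estimate of $|h_m|_S$. The terms $|\alpha^{m-i}(e_i)|$ have to be measured with the generators $S$ of $\Gamma$, whereas the growth bound for $\alpha$ on $H$ is a statement about the generators $S_H$ of $H$; converting between $|\cdot|_S$ and $|\cdot|_{S_H}$ on $H$ may cost a distortion factor, and the crude chain $|\alpha^{m-i}(e_i)|_S\le|\alpha^{m-i}(e_i)|_{S_H}\le|e_i|_{S_H}\cdot(\text{growth of }\alpha|_H\text{ at time }m-i)$ combined with $|e_i|_{S_H}\le(\text{distortion of }H\text{ in }\Gamma)(|e_i|_S)$ can, after summation over $i$, exceed $(R+\varepsilon)^m$ when $H$ is distorted. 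To make the argument go through in general I would either keep the lifts $u_m$ under tighter control so that the correction words $e_i$ stay uniformly bounded in length, or else analyse directly how the $H$-part of $\alpha^m(g_j)$ is assembled from the $H$-parts of the $\alpha^{m-1}(g_\ell)$ and observe that its "replication" is governed by the dynamics of $\bar\alpha$ on $\Gamma/H$ rather than by the raw constant $\max_\ell|\alpha(g_\ell)|_S$; when $H$ is undistorted in $\Gamma$ — which already covers the finite-index subgroups of the previous theorem and direct factors — the crude bound is enough and the proof is clean.
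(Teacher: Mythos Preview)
Your worry in the last paragraph is the real gap, and it is not repairable by ``keeping the lifts $u_m$ under tighter control'': once you pick a fresh geodesic lift $u_m$ at each step, the correction $e_{m+1}=u_{m+1}^{-1}\alpha(u_m)$ is an element of $H$ whose $S$-length grows like $\eta^m$ but whose $S_H$-length can be arbitrarily larger when $H$ is distorted in $\Gamma$. Since the only way to bound $|\alpha^{m-i}(e_i)|_S$ via the growth of $\alpha|_H$ is through $|\alpha^{m-i}(e_i)|_{S_H}\le |e_i|_{S_H}\cdot\max_\ell|\alpha^{m-i}(h_\ell)|_{S_H}$, you are forced to convert $|e_i|_S$ to $|e_i|_{S_H}$, and the summation blows up. As you note, the argument as written only goes through for undistorted $H$.

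The paper sidesteps this by \emph{not} choosing a new lift at each iterate. It fixes a single large $N$ with $|\bar\alpha^N(\bar g_i)|_{\bar S}\le\eta^N$, writes once and for all
\[
\alpha^N(g_i)=h_i\,g_{i1}\cdots g_{ir_i},\qquad r_i\le\eta^N,\ h_i\in H\ \text{fixed},
\]
and then iterates $\alpha^N$ rather than $\alpha$. The correction terms are now the finitely many specific elements $h_1,\dots,h_k$, independent of $m$; for each fixed $h_i$ the definition of $GR(\alpha|_H)$ gives a constant $C$ with $|\alpha^{mN}(h_i)|_{S_H}\le C\gamma^{mN}$ directly, no distortion conversion needed. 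A straightforward induction on $m$ then yields
\[
|\alpha^{mN}(g_i)|_S\le C\bigl(\gamma^{mN}+\gamma^{(m-1)N}\eta^N+\cdots+\eta^{mN}\bigr)\le C(m+1)\max\{\gamma,\eta\}^{mN},
\]
and taking $mN$-th roots finishes. Freezing the correction elements by passing to a fixed power of $\alpha$ is precisely the missing idea in your scheme.
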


\begin{proof}  Take $\gamma > GR(\alpha \text{ on } H)$ and $\eta >
GR(\alpha \text{ on } \Gamma/H) \; (\gamma, \eta >1)$. Let
\begin{eqnarray*}
\Gamma/H = gp(Hg_1, \cdots, Hg_k).
\end{eqnarray*}
Then there exists $N$ such that if $n \geq N$ then
\begin{eqnarray*}
|\alpha^n(Hg_i)|_{\Gamma/H} \leq \eta^n \text{ for all } i = 1,
\cdots, k.
\end{eqnarray*}
For each $i=1,2, \cdots, k$:
\begin{eqnarray*}
\alpha^N(g_i) \in \alpha^N(Hg_i)=H g_{i1} g_{i2} \cdots g_{ir_i}
\end{eqnarray*}
where $r_i \leq \eta^N$ and $g_{ij} \in \{g_1, \cdots, g_k\}$.
Thus for $i=1, \cdots, k$,
$$\alpha^N(g_i) = h_i g_{i1} g_{i2} \cdots g_{ir_i} \text{ for some }h_i
\in H \hphantom{xx} (1).$$

Fix a set of generators of $H$ which combined with the $g_i$ give
a set of generators of $\Gamma$. From the definition of $GR(\alpha
\text{ on } H)$ there is a constant $C>1$ such that
$$|\alpha^n(h_i)|_H \leq C \gamma^n \text{ for all } i, \text{all }
n=0,1,2, \cdots \hphantom{xx} (2).$$

We now prove by induction that for all $m= 1,2, \cdots$,
$$|\alpha^{mN}(g_i)|_\Gamma \leq C(\gamma^{mN} + \gamma^{(m-1)N}
\eta^N + \cdots + \gamma^N \eta^{(m-1)N}+\eta^{mN}) \hphantom{x} (3)
$$
The statement (3) is true for $m=1$ by (1), (2) above. Assume (3) true for some $m$. From
(1), for each $i$
\begin{eqnarray*}
                    \alpha^{(m+1)N}(g_i) &&= \alpha^{mN}(h_i) \alpha^{mN}(g_{i_1})
\cdots \alpha^{mN}(g_{ir_i})\\
\text{so } |\alpha^{(m+1)N}(g_i)|_\Gamma &&\leq
|\alpha^{mN}(h_i)|_H+\eta^N \max_{1\leq i \leq k}|\alpha^{mN}(g_i)|_\Gamma\\
                                         && \leq C \gamma^{mN} + \eta^N C (\gamma^{mN} + \gamma^{(m-1)N}\eta^N+ \cdots+ \eta^{mN})\\
                                         && \leq C(\gamma^{(m+1)N}+\eta^N \gamma^{mN}+ \cdots+\eta^{(m+1)N}).
\end{eqnarray*}
This completes the induction. It then follows that
\begin{eqnarray*}
&&|\alpha^{mN}(g_i)|_\Gamma \leq C(m+1) \max\{\gamma, \eta\}^{mN},\\
&&\text{so} \lim_{m \rightarrow \infty} |\alpha^{mN}(g_i)|_\Gamma^{1/{mN}} \leq \lim_{m \rightarrow \infty} (C(m+1))^{1/{mN}}
\max\{\gamma, \eta\} = \max\{\gamma, \eta\}.
\end{eqnarray*}
If $h$ is one of the generators of $h$, $|\alpha^n(h)|_\Gamma \leq
|\alpha^n (h)|_H$, so
\begin{eqnarray*}
\lim_{n \rightarrow \infty} {|\alpha^n (h)|_\Gamma}^{1/n} \leq
GR(\alpha \text{ on } H) < \gamma.
\end{eqnarray*}
Hence $GR(\alpha \text{ on } \Gamma) \leq \max\{\gamma, \eta\}$.
This completes the proof.
\end{proof}

Suppose that $\Gamma$ is generated by $s_1,...,s_n$ and $H$ is a subgroup of $\Gamma$.  We say that $H$ has a complement relative to $s_1,..,s_n$ if $H$ has a generating system included in $s_1,...,s_n$.  Then for an endomorphism $\alpha$ on $\Gamma$ we clearly have $GR(\alpha \text{ on } H) \le GR(\alpha)$.  Combining this with Theorem 3.2 we obtain.

\begin{corollary} Let $H$ be a finitely generated normal subgroup of a finitely
generated group $\Gamma$ such that $H$ is not distorted and $\alpha:\Gamma \rightarrow \Gamma$
be a group endomorphism such that $\alpha(H) \subset H$. If $H$ has a complement relative to the generating system of $\Gamma$ then
$$GR(\alpha) = \max \{
GR(\alpha \text{ on } H), GR(\alpha \text{ on } {\Gamma}/H)\}.$$
\end{corollary}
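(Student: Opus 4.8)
The plan is to combine the two inequalities we already have. From Theorem~\ref{normal} we know that $GR(\alpha) \leq \max\{GR(\alpha\text{ on }H), GR(\alpha\text{ on }\Gamma/H)\}$, so it suffices to prove the reverse inequality
$$GR(\alpha) \geq \max\{GR(\alpha\text{ on }H), GR(\alpha\text{ on }\Gamma/H)\}.$$
For this I would establish the two bounds $GR(\alpha) \geq GR(\alpha\text{ on }\Gamma/H)$ and $GR(\alpha) \geq GR(\alpha\text{ on }H)$ separately and then take the maximum.

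The first of these is already recorded: the lemma preceding Theorem~\ref{normal} gives $GR(\alpha\text{ on }\Gamma/H) \leq GR(\alpha)$ for any finitely generated normal subgroup, with no distortion or complement hypothesis needed. For the second bound I would use the hypothesis that $H$ has a complement relative to the generating system $s_1,\dots,s_n$ of $\Gamma$: this means some subset of the $s_i$ generates $H$. For such a generator $s_i$ of $H$, the word length $|\alpha^m(s_i)|_\Gamma$ computed in $\Gamma$ is at most the word length $|\alpha^m(s_i)|_H$ computed in $H$ — no, in fact for the inequality I want I need the opposite comparison, so this is exactly where the \emph{non-distortion} of $H$ enters: non-distortion says that for $h \in H$ the intrinsic length $|h|_H$ is bounded above by a linear (indeed, by definition, by a constant multiple of the) function of the extrinsic length $|h|_\Gamma$, i.e. there is a constant $D$ with $|h|_H \leq D\,|h|_\Gamma$ for all $h \in H$. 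Applying this with $h = \alpha^m(s_i) \in H$ gives $|\alpha^m(s_i)|_H \leq D\,|\alpha^m(s_i)|_\Gamma \leq D\,K_m$, where $K_m = \max_i |\alpha^m(s_i)|_\Gamma$. Taking $m$-th roots and letting $m \to \infty$, the constant $D$ washes out and we get $GR(\alpha\text{ on }H) \leq GR(\alpha)$ by Theorem~\ref{generalised}(1) applied on $H$ with the chosen generating set. Combining with the quotient bound yields $\max\{GR(\alpha\text{ on }H), GR(\alpha\text{ on }\Gamma/H)\} \leq GR(\alpha)$, and together with Theorem~\ref{normal} this gives equality.

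The main obstacle — and the place where the hypotheses are doing real work — is the inequality $GR(\alpha\text{ on }H) \leq GR(\alpha)$. Without the complement hypothesis, the generators of $H$ need not be expressible with bounded $\Gamma$-length in terms of the $s_i$, so there is no \emph{a priori} control of $|\alpha^m(h_j)|_\Gamma$ in terms of $K_m$; and without non-distortion, even a good bound on $|\alpha^m(s_i)|_\Gamma$ for $s_i \in H$ does not bound $|\alpha^m(s_i)|_H$, which is what the growth rate on $H$ is actually measured by. So the two hypotheses are precisely tuned to make the chain $|\alpha^m(s_i)|_H \leq D|\alpha^m(s_i)|_\Gamma \leq D K_m$ valid. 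A minor point to check is that Theorem~\ref{generalised}(1) is being legitimately invoked on $H$ with the specific generating set consisting of those $s_i$ lying in $H$; this is fine since $H$ is finitely generated (being finite-index? no — here $H$ need not be finite index, but it is assumed finitely generated in the statement, or follows from having a complement inside the finite set $s_1,\dots,s_n$).
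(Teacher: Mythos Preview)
Your proposal is correct and follows exactly the paper's approach: the paper derives the corollary from Theorem~\ref{normal} together with the observation (stated just before the corollary) that the complement hypothesis gives $GR(\alpha\text{ on }H)\le GR(\alpha)$, and with Lemma~3.1 for the quotient bound. In fact you are more careful than the paper, since you make explicit that the non-distortion hypothesis is what justifies comparing $|\alpha^m(s_i)|_H$ with $|\alpha^m(s_i)|_\Gamma$---the paper simply says ``clearly'' and then inserts the undistorted hypothesis into the corollary statement without comment.
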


\section{Growth Rate of an Endomorphism for Different Classes of Groups}

In this section we consider the growth rate of an endomorphism for several classes of groups. In particular we look at abelian, nilpotent, and polycyclic groups

\subsection{Abelian groups}

Let $G$ be a free abelian group of rank one, that is an infinite cyclic group, and suppose that $g$ is a generator of $G$. If $\alpha$ is an endomorphism on $G$ then $\alpha(g) = mg$ for some integer $m$ using additive notation.  It follows that $\alpha^n(g) = m^n g$ and therefore   $GR(\alpha) = \lim_{m \to \infty} {|\alpha^m(g)|^{1/m}} = m$.  Now let $\alpha$ be an endomorphism of a free abelian group $\Gamma$ of rank $n$.  The endomorphism $\alpha$ can then be represented by an integral matrix.  If the group $\Gamma$ is tensored with the complex numbers $\mathbb{C}$ and we consider the endomorphism $\alpha \otimes 1$ on $\Gamma \otimes \mathbb{C}$ then the matrix can be diagonalized with its eigenvalues $\lambda_1, \cdots, \lambda_n$ down the diagonal.  The endomorphism is then just multiplication of the generators by the appropriate $\lambda_i$.  The power $\alpha^m$ is then just multiplication by $\lambda_i^m$ and hence the growth rate is $\max(\lambda_i^m)^{\frac{1}{m}} = \max (\lambda_i)$. The same argument applied to a general finitely generated abelian group proves:

\begin{theorem}
For $\Gamma$ a finitely generated abelian group one calculates
\begin{eqnarray*}
GR(\alpha) = \max \{|\lambda_1|, |\lambda_2|, \cdots, |\lambda_r|\}
\end{eqnarray*}
where $\lambda_1, \cdots, \lambda_r$ are the eigenvalues of
$\alpha \otimes 1$ on $\Gamma \otimes \mathbb{C}$.
\end{theorem}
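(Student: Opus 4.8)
The plan is to reduce to the free abelian case and then identify the growth rate with the spectral radius of the associated integer matrix. Write $\Gamma\cong\mathbb{Z}^{n}\oplus T$ with $T$ the finite torsion subgroup. Since $T$ is characteristic it is $\alpha$-invariant, so $\alpha$ induces an endomorphism $\bar\alpha$ on $\bar\Gamma=\Gamma/T\cong\mathbb{Z}^{n}$; and since $T\otimes\mathbb{C}=0$ we have $\Gamma\otimes\mathbb{C}\cong\mathbb{C}^{n}$, with $\alpha\otimes 1$ carried by the same integer matrix $M$ that represents $\bar\alpha$, so the $\lambda_{i}$ are precisely the eigenvalues of $M$. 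The quotient lemma gives $GR(\bar\alpha)\le GR(\alpha)$, while $T$ is finite so all lengths $|\alpha^{m}(t)|_{T}$ are bounded and $GR(\alpha|_{T})\le 1$; hence Theorem~\ref{normal} yields $GR(\alpha)\le\max\{1,GR(\bar\alpha)\}$. It remains to show $GR(\bar\alpha)=\max_{i}|\lambda_{i}|$ and to note that the $1$ is harmless: an integer matrix is either nilpotent, with spectral radius $0$, or has a nonzero eigenvalue, in which case the constant term of that eigenvalue's minimal polynomial is a nonzero integer and some Galois conjugate (also an eigenvalue) has modulus $\ge 1$, so $\max\{1,GR(\bar\alpha)\}=GR(\bar\alpha)$ in all non-degenerate cases.

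Next I would compute $GR$ on $\mathbb{Z}^{n}$ directly. Using the standard generators $e_{1},\dots,e_{n}$, the word length is $|v|=\|v\|_{1}$ and $\bar\alpha^{m}(e_{i})=M^{m}e_{i}$, so $K_{m}=\max_{i}|\bar\alpha^{m}(e_{i})|=\max_{i}\|M^{m}e_{i}\|_{1}=\|M^{m}\|_{1\to 1}$, the $\ell^{1}$-operator norm (the maximal column sum). By Theorem~\ref{generalised}(1), $GR(\bar\alpha)=\lim_{m\to\infty}K_{m}^{1/m}$.

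Finally I would identify this limit with $\rho(M)=\max_{i}|\lambda_{i}|$. The clean route is Gelfand's spectral radius formula: for any submultiplicative matrix norm, $\|M^{m}\|^{1/m}\to\rho(M)$; moreover $\rho(M)^{m}=\rho(M^{m})\le\|M^{m}\|_{1\to 1}=K_{m}$ gives the lower bound $K_{m}^{1/m}\ge\rho(M)$ at once. If one prefers an elementary argument, pass to Jordan form $M=P(D+N)P^{-1}$: the entries of $(D+N)^{m}$ are bounded above by $\binom{m}{k}|\lambda|^{m}\le\mathrm{poly}(m)\,\rho(M)^{m}$, and along an eigenvector for an eigenvalue of maximal modulus they are bounded below by $c\,\rho(M)^{m}$; conjugating by $P,P^{-1}$ costs only a fixed constant, and the polynomial prefactor disappears on taking $m$-th roots. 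Either way $GR(\bar\alpha)=\rho(M)=\max_{i}|\lambda_{i}|$, which with the first paragraph gives $GR(\alpha)=\max\{|\lambda_{1}|,\dots,|\lambda_{r}|\}$.

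The main obstacle --- and the point where the sketch preceding the theorem is too optimistic --- is that $M$ need not be diagonalizable over $\mathbb{C}$, so one must genuinely handle Jordan blocks (or invoke Gelfand) and check that the polynomial-in-$m$ factors are washed out by the $m$-th root. A secondary point needing care is the torsion subgroup: the displayed identity is literally correct only because $\rho(M)\ge 1$ whenever $M$ is not nilpotent, so that the (at most $1$) contribution of the finite part is absorbed into the maximum.
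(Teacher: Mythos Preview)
Your argument is correct and in fact more complete than the paper's. The paper's justification (the paragraph preceding the theorem) follows the same outline --- represent $\alpha$ by an integer matrix and read off the growth rate from its eigenvalues --- but it \emph{assumes} the matrix can be diagonalized over $\mathbb{C}$ and then treats $\alpha^m$ as multiplication of each generator by $\lambda_i^m$; for the general finitely generated abelian case it simply says ``the same argument'' applies. You improve on this in two places. First, you avoid the diagonalizability hypothesis by recognizing $K_m=\max_i\|M^m e_i\|_1=\|M^m\|_{1\to 1}$ and invoking Gelfand's spectral radius formula (or, equivalently, a Jordan-block estimate showing the polynomial factors vanish under the $m$-th root); this is exactly the gap you flag at the end, and your fix is the right one. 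Second, you handle the torsion part explicitly by applying Theorem~\ref{normal} with $H=T$ together with the quotient lemma, and you supply the arithmetic observation (the product of the Galois conjugates of a nonzero eigenvalue of an integer matrix is a nonzero integer, forcing $\rho(M)\ge 1$) that is needed to absorb the at-most-$1$ contribution of $T$ into $\max_i|\lambda_i|$. The paper's sketch does not address either issue. Your caveat about the degenerate nilpotent-plus-torsion case is also well taken: when $M$ is nilpotent but $\alpha$ does not eventually vanish on $T$, one gets $GR(\alpha)=1$ while $\max_i|\lambda_i|=0$, so the displayed identity can fail on this boundary; the paper does not mention this.
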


We clarify this with the following explicit example.  Let $G = \mathbb{Z} \times \mathbb{Z}$ the free abelian group of rank 2.  Let $a,b$ be free generators of $G$ and suppose that $\alpha$ is the endomorphism given by
$$\alpha: x \mapsto 2y, y \mapsto x.$$
This endomorphism is then represented by the integral matrix $\left  (\begin{matrix} 0&2\\1&0 \end{matrix} \right )$ which has complex eigenvalues $\pm \sqrt{2}$.  By the theorem then the growth rate is $\sqrt{2}$.  To see this directly notice that if $n$ is even then
$$\alpha^n(x) = 2^{\frac{n}{2}}x, \alpha^n(y) = 2^{\frac{n}{2}}y.$$
Then taking limits we find that the growth rate is $2^{\frac{1}{2}} = \sqrt{2}$.

\subsection{Nilpotent groups}

To compute the growth rate of an endomorphism on a finitely generated nilpotent group we need the following results on the growth rate of an endomorphism on the lower central series.

Recall that the lower central series of a group $\Gamma$ is defined inductively by $\Gamma_1 = \Gamma$ and $\Gamma_j = [\Gamma,\Gamma_{j-1}]$ (see [Ba]).

Now if $g_1, \cdots, g_p \in \Gamma = \Gamma_1$ and $g \in
\Gamma_{j-1}$, where $\Gamma_j =[\Gamma, \Gamma_{j-1}]$ and
$\epsilon = \pm 1$ then we have the following commutator identities (see[Ba]).

\begin{eqnarray*}
[\prod_i {g_i}^{\epsilon_i}, g] = \prod_i {[g_i, g]}^{\epsilon_i}
\;\; (\text{mod } \Gamma_{j+1})
[g, {\prod_i} {g_i}^{\epsilon_i}] = \prod_i {[g_i, g]}^{\epsilon_i} \;\; (\text{ mod } \Gamma_{j+1}).
\end{eqnarray*}

\begin{lemma} (see [Ba])
If $g_1, \cdots, g_p \in \Gamma = \Gamma_1, \; h_1, \cdots, h_q
\in \Gamma_{j-1}$, and if $\epsilon_i = \pm 1, \; \sigma_j = \pm
1$. Then
\begin{eqnarray*}
[\prod_i {g_i}^{\epsilon_i}, \prod_j {h_j}^{\sigma_j}] = \prod_i
[g_i, \prod_j {h_j}^{\sigma_j}]^{\epsilon_i} = \prod_i \prod_j
[g_i, h_j]^{\epsilon_i \sigma_j} \;\; (\text{mod } \Gamma_{j+1})
\end{eqnarray*}
\end{lemma}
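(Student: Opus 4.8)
The plan is to deduce the lemma directly from the two commutator-collection identities displayed immediately above it, using throughout that $\Gamma_j/\Gamma_{j+1}$ is a central --- hence abelian --- subgroup of $\Gamma/\Gamma_{j+1}$, which is immediate from $[\Gamma,\Gamma_j]=\Gamma_{j+1}$. The conceptual content is that the commutator bracket induces a map $\Gamma_1\times\Gamma_{j-1}\to\Gamma_j/\Gamma_{j+1}$ which, modulo $\Gamma_{j+1}$, is multiplicative (``additive'') in each argument separately --- this is exactly what the two displayed identities assert, together with the standard convention $[a,b^{-1}]\equiv[a,b]^{-1}$ --- and the lemma is the multiplicative translation of the statement that such a bimultiplicative map sends a pair of products to the corresponding double product. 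Since every $[g_i,h_j]$ lies in $[\Gamma,\Gamma_{j-1}]=\Gamma_j$, in any product of such commutators one may, modulo $\Gamma_{j+1}$, freely reorder factors and distribute exponents over products; I will use this repeatedly without further comment.

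For the first equality, set $g=\prod_j h_j^{\sigma_j}$; since each $h_j\in\Gamma_{j-1}$ and $\Gamma_{j-1}$ is a subgroup, $g\in\Gamma_{j-1}$, so the first displayed identity applies with this $g$ in the second slot and gives $[\prod_i g_i^{\epsilon_i},\prod_j h_j^{\sigma_j}]\equiv\prod_i[g_i,\prod_j h_j^{\sigma_j}]^{\epsilon_i}\ (\text{mod }\Gamma_{j+1})$, which is precisely the first asserted congruence. For the second, fix $i$ and expand the inner commutator in its second argument by the analogous collection identity for the second slot --- valid for the same reason, every $[g_i,h_j]$ lying in $\Gamma_j$ --- to get $[g_i,\prod_j h_j^{\sigma_j}]\equiv\prod_j[g_i,h_j]^{\sigma_j}\ (\text{mod }\Gamma_{j+1})$. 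Raising to the power $\epsilon_i$ (legitimate because the factors lie in the abelian group $\Gamma_j/\Gamma_{j+1}$) gives $[g_i,\prod_j h_j^{\sigma_j}]^{\epsilon_i}\equiv\prod_j[g_i,h_j]^{\epsilon_i\sigma_j}$, and taking the product over $i$ and merging the two layers of products, again by commutativity modulo $\Gamma_{j+1}$, yields $\prod_i\prod_j[g_i,h_j]^{\epsilon_i\sigma_j}\ (\text{mod }\Gamma_{j+1})$, completing the chain of equalities.

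The one point that genuinely needs care, and where I expect the bulk of the work to lie, is checking that the iterated use of the collection identities introduces only correction terms lying in $\Gamma_{j+1}$, so that they disappear in the stated congruences. This is clear for a single application --- it is the content of the identities quoted from [Ba] --- but when the factors $g_i^{\epsilon_i}$ and $h_j^{\sigma_j}$ are peeled off one at a time the accumulated corrections must be shown to stay in $\Gamma_{j+1}$; this holds because each such correction is a product of commutators of an element of $\Gamma$ with an element of $\Gamma_j$ and so lies in $[\Gamma,\Gamma_j]=\Gamma_{j+1}$, and because $\Gamma_{j+1}$ is normal in $\Gamma$. Making this precise is cleanest as a double induction on $p$ and $q$: the base cases $p=1$ or $q=1$ are the displayed identities (plus the inverse convention), and the inductive step removes one factor from the larger of the two products and applies the induction hypothesis to the remainder. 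Everything past this bookkeeping is just the fact that $\Gamma_j/\Gamma_{j+1}$ is abelian.
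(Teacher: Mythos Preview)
The paper does not actually prove this lemma: it is stated with the parenthetical ``(see [Ba])'' and no proof is given, the two displayed commutator identities preceding it likewise being quoted from Baumslag's notes. Your argument---applying the first displayed identity to split off the $g_i^{\epsilon_i}$ factors, then the analogous second-slot identity to split off the $h_j^{\sigma_j}$ factors, using throughout that $\Gamma_j/\Gamma_{j+1}$ is central in $\Gamma/\Gamma_{j+1}$---is correct and is exactly the standard derivation one finds in [Ba]. One small remark: what you call the ``standard convention'' $[a,b^{-1}]\equiv[a,b]^{-1}$ is not a convention but itself a consequence of the same centrality (since $1=[a,bb^{-1}]\equiv[a,b][a,b^{-1}]$ modulo $\Gamma_{j+1}$), and your final paragraph somewhat overstates the difficulty---once the two single-slot identities are in hand, the double induction is mechanical.
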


The following lemma relating growth rates to the lower central series
 was stated without proof in [B].  It will provide the main calculating tool in determining growth rates in nilpotent groups.

\begin{lemma}
Let $\Gamma$ be a finitely generated group generated by $S= \{s_1,
\cdots, s_n\}$, and $\alpha: \Gamma \rightarrow \Gamma$ be an
endomorphism of $\Gamma$. If $\Gamma_1 = \Gamma$ and $\Gamma_{j+1}
= [\Gamma, \Gamma_j]$, then
\begin{eqnarray*}
GR(\alpha) \geq GR(\alpha \text{ on }
{\Gamma_j}/{\Gamma_{j+1}})^{1/j}.
\end{eqnarray*}
\end{lemma}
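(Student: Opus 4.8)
The plan is to show that iterating $\alpha$ on a commutator $[s_{i_1}, s_{i_2}, \dots, s_{i_j}] \in \Gamma_j$ of weight $j$ produces, modulo $\Gamma_{j+1}$, something whose length in $\Gamma_j/\Gamma_{j+1}$ is controlled by the $j$-th power of $K_m = \max_i |\alpha^m(s_i)|_\Gamma$. Since $\Gamma_j/\Gamma_{j+1}$ is a finitely generated abelian group generated (as an abelian group) by the images of the basic commutators of weight $j$ in the $s_i$, it suffices to bound $|\alpha^{m}(c)|_{\Gamma_j/\Gamma_{j+1}}$ for each such basic commutator $c$, and then take $m$-th roots and let $m \to \infty$; the factor $1/j$ will emerge because a single generator of $\Gamma$ expands at rate $GR(\alpha)$ while a weight-$j$ commutator, being built from $j$ generators, expands at rate $GR(\alpha)^j$ — but this latter rate is exactly the growth rate of $\alpha$ on $\Gamma_j/\Gamma_{j+1}$.

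First I would fix a basic commutator $c = [s_{i_1}, \dots, s_{i_j}]$ of weight $j$ and write $\alpha^m(c) = [\alpha^m(s_{i_1}), \dots, \alpha^m(s_{i_j})]$. Each $\alpha^m(s_{i_\ell})$ is a word $W_\ell$ in the generators of length $\le K_m$. Using the commutator identities preceding Lemma 4.2 — and in particular the bilinearity modulo $\Gamma_{j+1}$ expressed in Lemma 4.2 — I would expand $[\alpha^m(s_{i_1}), \dots, \alpha^m(s_{i_j})]$ modulo $\Gamma_{j+1}$ as a product of at most $\prod_{\ell=1}^{j} |W_\ell| \le (K_m)^j$ commutators of weight $j$ in the generators $s_1, \dots, s_n$. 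Each such weight-$j$ commutator lies in $\Gamma_j$ and, modulo $\Gamma_{j+1}$, can be rewritten in terms of the finitely many basic commutators of weight $j$ using collection; this rewriting multiplies the length in $\Gamma_j/\Gamma_{j+1}$ by a constant $D$ depending only on $\Gamma$ and $j$, not on $m$. Hence $|\alpha^m(c)|_{\Gamma_j/\Gamma_{j+1}} \le D (K_m)^j$.

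Next I would take $m$-th roots: $|\alpha^m(c)|_{\Gamma_j/\Gamma_{j+1}}^{1/m} \le D^{1/m} \big((K_m)^{1/m}\big)^j$, and let $m \to \infty$. By Theorem 2.1(1), $(K_m)^{1/m} \to GR(\alpha)$, so $\limsup_m |\alpha^m(c)|_{\Gamma_j/\Gamma_{j+1}}^{1/m} \le GR(\alpha)^j$. Since the basic commutators of weight $j$ generate $\Gamma_j/\Gamma_{j+1}$ as an abelian group, and (by the argument analogous to Theorem 2.1(1) applied in $\Gamma_j/\Gamma_{j+1}$) the growth rate of an endomorphism equals the supremum over generators of these $\limsup$s, we get $GR(\alpha \text{ on } \Gamma_j/\Gamma_{j+1}) \le GR(\alpha)^j$, which rearranges to the claimed inequality $GR(\alpha) \ge GR(\alpha \text{ on } \Gamma_j/\Gamma_{j+1})^{1/j}$.

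The main obstacle is the bookkeeping in the expansion step: one must verify that expanding a weight-$j$ commutator of words $[W_1, \dots, W_j]$ modulo $\Gamma_{j+1}$ really does split multilinearly into at most $\prod_\ell |W_\ell|$ basic-commutator terms with only a bounded-by-a-constant overhead, uniformly in $m$. This requires an induction on the weight $j$ using Lemma 4.2 together with the fact that in a nilpotent context (or modulo $\Gamma_{j+1}$) the relevant commutator maps are multilinear, plus the standard collection process to pass from arbitrary weight-$j$ commutators to basic ones; the subtlety is ensuring the constant $D$ absorbing the collection and the reordering does not grow with $m$ or with the lengths $|W_\ell|$. Once this uniform multilinear estimate is in place, the rest is the routine limit computation above.
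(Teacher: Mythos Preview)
Your proposal is correct and follows essentially the same route as the paper: both use the multilinearity of the commutator map modulo $\Gamma_{j+1}$ (Lemma~4.2) to bound the length of $\alpha^m$ applied to a weight-$j$ commutator by a constant times $K_m^{\,j}$, then take $m$-th roots. The only organizational difference is that the paper sets this up recursively, proving $K_j(n)\le 4\,K_1(n)\,K_{j-1}(n)$ and then unwinding, whereas you expand all $j$ slots at once; also, the paper avoids your worry about the collection constant $D$ simply by taking the set $S_{\Gamma_j}=\{[g,h]:g\in S_{\Gamma_1},\,h\in S_{\Gamma_{j-1}}\}$ of simple (not basic) commutators as the generating set for $\Gamma_j/\Gamma_{j+1}$, so each term in the multilinear expansion already has length~$1$ and no collection step is needed.
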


\begin{proof}  Let $S_{\Gamma_1}$ be a finite set of generators for
$\Gamma_1 = \Gamma$. Inductively we define
$$S_{\Gamma_j} = \{[g, h] : g \in \Gamma_1 , h \in \Gamma_{j-1}
(\text{for } j = 2, 3, \cdots)\}.
$$
Then $S_{\Gamma_j}$ is finite and  $\{ h\Gamma_{j+1} : h \in
S_{\Gamma_j} \}$ is a set of generators for ${\Gamma_j} / {\Gamma_{j+1}}$. Write
\begin{eqnarray*}
K_j (n) = \max \{|\alpha^n (h) ; \text{ mod } \Gamma_{j+1}|: h \in \Gamma_j \}.
\end{eqnarray*}
Let $g \in S_{\Gamma_1}$ and $h \in S_{\Gamma_{j-1}}$, so $[g, h] \in S_{\Gamma_j}$. Then
\begin{eqnarray*}
\alpha^n ([g,h]) &=& [\alpha^n (g), \alpha^n (h)]\\
                 &=& [g_1, g_2, \cdots g_p, h_1 h_2 \cdots h_q \; (\text{ mod } \Gamma_j)]\\
                 &&(\text{where } g_i \in \Gamma_1, h_j \in \Gamma_{j-1}, p \leq K_1(n), q \leq K_{j-1}(n))\\
                 &=& \prod_{i=1} ^p \prod_{j=1} ^q [g_i, h_j] \; (\text{ mod } \Gamma_{j+1}) (\text{ since } [g_i,\Gamma_j] \subset \Gamma_{j+1}).
\end{eqnarray*}
Hence
\begin{eqnarray*}
K_j(n) \leq 4 p q \leq 4 K_1 (n) K_{j-1} (n)
\end{eqnarray*}
Thus
\begin{eqnarray*}
{K_j(n)}^{1/n} \leq 4^{1/n} K_1(n)^{1/n} K_{j-1}(n)^{1/n}
\end{eqnarray*}
Then
\begin{eqnarray*}
GR(\alpha \text{ on } {\Gamma_j} /{\Gamma_{j+1}}) \leq
GR(\alpha) GR(\alpha \text{ on } {\Gamma_{j-1}}/{\Gamma_j})\\
\leq GR(\alpha)^2GR(\alpha \text { on } {\Gamma_{j-2}}/{\Gamma_{j-1}})\\
\leq ... \leq GR(\alpha)^{j-1}GR(\alpha \text { on } {\Gamma_{1}}/{\Gamma_{2}}))\\
\leq GR(\alpha)^j
\end{eqnarray*}
\end{proof}

We now can compute the growth rate of an endomorphism on a nilpotent group of class $t$.

\begin{theorem}
Suppose $\Gamma$ is a finitely generated nilpotent group of class
$t$. Then

$\hphantom{xx}$(1) $GR(\alpha)  = \max_{1 \leq k \leq t} \{GR(\alpha \text{ on }
    ({\Gamma_k}/{\Gamma_{k+1}}))^{\frac{1}{k}}\}$.

$\hphantom{xx}$(2) $GR(\alpha) = \max \{GR(\alpha \text{ on } \Gamma/{\Gamma_t}), GR(\alpha \text{ on } \Gamma_t)^{1/t} \}$, where $\Gamma/{\Gamma_t}$ is a
finitely generated nilpotent group of class $t-1$ and $\Gamma_t$
is central in $\Gamma$.
\end{theorem}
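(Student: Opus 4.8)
The plan is to establish part (2) first as a self-contained statement and then deduce part (1) by induction on the nilpotency class $t$. Observe first that the lower central series is invariant under every endomorphism, so $\alpha(\Gamma_k)\subseteq\Gamma_k$ for all $k$, and $\alpha$ induces endomorphisms of $\Gamma_t$, of $\Gamma/\Gamma_t$, and of each $\Gamma_k/\Gamma_{k+1}$, all of which are finitely generated since $\Gamma$ is. For part (2), the inequality $GR(\alpha)\ge GR(\alpha\text{ on }\Gamma/\Gamma_t)$ is the quotient lemma above, and $GR(\alpha)\ge GR(\alpha\text{ on }\Gamma_t)^{1/t}$ is the case $j=t$ of the preceding lemma on the lower central series, since $\Gamma$ having class $t$ forces $\Gamma_{t+1}=\{1\}$, whence $\Gamma_t/\Gamma_{t+1}=\Gamma_t$. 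For the reverse inequality I would re-run the proof of Theorem~\ref{normal} with $H=\Gamma_t$ essentially verbatim, with the single change that, where that argument uses $|\alpha^{mN}(h_i)|_\Gamma\le|\alpha^{mN}(h_i)|_{\Gamma_t}$, I instead invoke the distortion estimate
$$|g|_\Gamma\le C\,|g|_{\Gamma_t}^{1/t}\qquad(g\in\Gamma_t),$$
where $C$ depends only on $\Gamma$ (the choice of finite generating sets being immaterial for growth rates). Granting this, if $\gamma>GR(\alpha\text{ on }\Gamma_t)^{1/t}$ and $\eta>GR(\alpha\text{ on }\Gamma/\Gamma_t)$ then $|\alpha^{mN}(h_i)|_\Gamma\le C\,|\alpha^{mN}(h_i)|_{\Gamma_t}^{1/t}\le C'\gamma^{mN}$ for all $m$, and the same recursion as in Theorem~\ref{normal} yields $|\alpha^{mN}(g_i)|_\Gamma\le C''(m+1)\max\{\gamma,\eta\}^{mN}$; taking $(mN)$-th roots and applying Theorem~\ref{generalised} gives $GR(\alpha)\le\max\{\gamma,\eta\}$, and letting $\gamma,\eta$ decrease to their infima completes part (2).

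The distortion estimate is the one genuinely new ingredient, and it is where I expect essentially all of the work to lie. What makes it tractable here is that $\Gamma_{t+1}=\{1\}$, so the commutator multilinearity lemma (quoted above from [Ba]) holds as an \emph{exact} identity on $\Gamma_t$, not merely modulo $\Gamma_{t+1}$. Concretely, $\Gamma_t$ is generated by the iterated commutators $w=[s_{i_1},\dots,s_{i_t}]$ in the generators of $\Gamma$; writing $g\in\Gamma_t$ as a product $g=\prod_j w_j^{a_j}$ with $\max_j|a_j|$ comparable to $|g|_{\Gamma_t}$, multilinearity lets us rewrite each power as $w_j^{a_j}=[s_{i_1}^{b_1},\dots,s_{i_t}^{b_t}]$ whenever $b_1\cdots b_t=a_j$; choosing each $|b_\ell|$ of size about $|a_j|^{1/t}$ and absorbing the small leftover power of $w_j$ by an elementary iterate-and-recurse step yields $|w_j^{a_j}|_\Gamma\le C_0|a_j|^{1/t}$, and summing over the finitely many $j$ gives the estimate. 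Only the stated one-sided bound is needed, so sharpness of the exponent in the other direction and of the constant are irrelevant.

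Finally, part (1). The inequality $GR(\alpha)\ge\max_{1\le k\le t}GR(\alpha\text{ on }\Gamma_k/\Gamma_{k+1})^{1/k}$ is immediate from the lower central series lemma applied to each $k$. For the reverse I would induct on $t$: the case $t=1$ is trivial, since then $\Gamma_1/\Gamma_2=\Gamma$. For the inductive step, part (2) gives $GR(\alpha)=\max\{GR(\alpha\text{ on }\Gamma/\Gamma_t),\,GR(\alpha\text{ on }\Gamma_t)^{1/t}\}$; now $\Gamma/\Gamma_t$ is finitely generated nilpotent of class $t-1$, with $(\Gamma/\Gamma_t)_k=\Gamma_k/\Gamma_t$ and $(\Gamma/\Gamma_t)_k/(\Gamma/\Gamma_t)_{k+1}\cong\Gamma_k/\Gamma_{k+1}$ for $1\le k\le t-1$, these identifications being $\alpha$-equivariant, so the inductive hypothesis gives $GR(\alpha\text{ on }\Gamma/\Gamma_t)=\max_{1\le k\le t-1}GR(\alpha\text{ on }\Gamma_k/\Gamma_{k+1})^{1/k}$, while $GR(\alpha\text{ on }\Gamma_t)^{1/t}=GR(\alpha\text{ on }\Gamma_t/\Gamma_{t+1})^{1/t}$ supplies the remaining $k=t$ term. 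Taking the maximum of these completes the induction.
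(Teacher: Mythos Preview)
Your proposal is correct, and the overall architecture---prove part~(2) first, then deduce part~(1) by induction on the nilpotency class $t$---matches what the paper describes. Note, however, that the paper does not actually give its own proof of this theorem: it defers to Bowen~[B], saying only that ``Bowen's proof proceeds by induction on the nilpotency class $t$'' and that the proof there is correct apart from the exponent being dropped in the final line. So there is little to compare against in detail.

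What you add, and correctly identify as ``the one genuinely new ingredient,'' is the distortion estimate $|g|_\Gamma\le C\,|g|_{\Gamma_t}^{1/t}$ for $g\in\Gamma_t$. This is precisely the mechanism that upgrades the generic normal-subgroup bound of Theorem~\ref{normal} (which would only give $GR(\alpha)\le\max\{GR(\alpha\text{ on }\Gamma_t),\,GR(\alpha\text{ on }\Gamma/\Gamma_t)\}$, as in the paper's Theorem~4.3) to the sharp version with the $1/t$ exponent. Your sketch via exact multilinearity of weight-$t$ commutators (using $\Gamma_{t+1}=1$) is the standard argument and is sound; the ``iterate-and-recurse'' step to handle the leftover when $a_j$ is not a perfect $t$-th power is routine. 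The paper itself never spells out this distortion input, so in that sense your write-up is more complete than what appears in the text.
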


We note that in Bowen's paper this result was stated incorrectly with the power $\frac{1}{k}$ omitted. There is a complete proof in [B]. The proof is correct but in the final step the possible power is suddenly lost. Bowen's proof proceeds by induction on the nilpotency class $t$ and then uses Lemma 4.4. The power is absolutely necessary as we will show in the following example with the Heisenberg group.

The Heisenberg group is the subgroup of $SL(3,\mathbb{Z})$ given by
\begin{eqnarray*}
\Gamma_1 = \Gamma &&= \{\left(%
\begin{array}{ccc}
  1 & a & b \\
  0 & 1 & c \\
  0 & 0 & 1 \\
\end{array}\right )\}
\end{eqnarray*}
We denote an element of the Heisenberg group as above by $(a,b,c)$. Multiplication is then given by $(a,b,c)(p,q,r) = (a+p,b+q+ar,c+r)$.

The Heisenberg group is nilpotent of class 2 and the lower central series and its quotients are given by

$$\Gamma_1=\{(a,b,c)\},\Gamma_2=\{(0,a,0)\},\Gamma_3=\{(0,0,0)\}.$$
$${\Gamma_1}/{\Gamma_2} = \{(a,b,c)\cdot(0,q,0): q \in \mathbb{Z}\}
= \{(a, \mathbb{Z}, c)\}, {\Gamma_2}/{\Gamma_3}= \Gamma_2.$$

Given real numbers $\lambda, \gamma$, let $\phi: \Gamma \rightarrow \Gamma$ be the map given by
$$\phi(a,b,c) = (\lambda a, \lambda \gamma b, \gamma c).$$
It is straightforward to show that $\phi$ is an endomorphism.

Let $\lambda= \gamma =2$, so that $\phi(a,b,c)\ = (2a, 4b, 2c)$. Using $(1,0,0),(0,1,0),(0,0,1)$ as a set of generators for $\Gamma$ it follows from a direct computation that the growth rate $\phi$ is $\le 2$. However $\phi(\Gamma_2) \subset \Gamma_2$ and again by direct computation
$$GR(\phi \text{ on } \Gamma_2) = GR(\phi \text{ on } \Gamma_2/\Gamma_3) = 4.$$
Hence
\begin{eqnarray*}
\max\{GR(\phi \text{ on } {\Gamma_1}/{\Gamma_2}), GR(\phi \text{
on } {\Gamma_2}/{\Gamma_3})\}  \ne GR(\phi \text{ on } \Gamma).
\end{eqnarray*}

Therefore equality would not hold without the exponent. Since $4^{\frac{1}{2}} = 2$ it does hold with the exponent.
\smallskip



\begin{theorem} $GR(\alpha \text{ on } \Gamma) \leq
\max_{1 \leq k \leq t} GR(\alpha \text{ on }
{\Gamma_k}/{\Gamma_{k+1}})$.
\end{theorem}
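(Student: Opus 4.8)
The plan is to argue by induction on the nilpotency class $t$, using Theorem~\ref{normal} applied to the central term $\Gamma_t$ of the lower central series as the main engine. Two standard facts about a finitely generated nilpotent group $\Gamma$ will be used throughout: every subgroup of $\Gamma$ is again finitely generated, and each term $\Gamma_j$ of the lower central series is fully invariant, so that $\alpha(\Gamma_j)\subseteq\Gamma_j$ (indeed $\Gamma_j$ is generated by weight-$j$ commutators, and $\alpha$ carries such a commutator to another of the same weight). These are precisely what is needed to ensure the hypotheses of Theorem~\ref{normal} hold at each stage.

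For the base case $t=1$ the group is abelian, $\Gamma_2=1$, and $\Gamma_1/\Gamma_2=\Gamma$, so the asserted inequality is the trivial identity $GR(\alpha\text{ on }\Gamma)=GR(\alpha\text{ on }\Gamma_1/\Gamma_2)$. For the inductive step, assume the statement for all finitely generated nilpotent groups of class less than $t$, and let $\Gamma$ have class $t\ge 2$. Take $H=\Gamma_t$, which is finitely generated, central (hence normal), and $\alpha$-invariant. Theorem~\ref{normal} then gives
$$GR(\alpha\text{ on }\Gamma)\ \le\ \max\{\,GR(\alpha\text{ on }\Gamma_t),\ GR(\alpha\text{ on }\Gamma/\Gamma_t)\,\}.$$
Since $\Gamma_{t+1}=1$, we have $\Gamma_t/\Gamma_{t+1}=\Gamma_t$, so the first term on the right is exactly the $k=t$ member of the family $GR(\alpha\text{ on }\Gamma_k/\Gamma_{k+1})$, and in particular is $\le\max_{1\le k\le t}GR(\alpha\text{ on }\Gamma_k/\Gamma_{k+1})$.

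To control the second term, observe that $\bar\Gamma:=\Gamma/\Gamma_t$ is finitely generated nilpotent of class $t-1$, with lower central series $\bar\Gamma_k=\Gamma_k\Gamma_t/\Gamma_t=\Gamma_k/\Gamma_t$ for $1\le k\le t$ (so that $\bar\Gamma_t=1$); moreover the canonical isomorphism $\bar\Gamma_k/\bar\Gamma_{k+1}\cong\Gamma_k/\Gamma_{k+1}$ intertwines the endomorphism induced by $\alpha$ on the two sides and therefore preserves growth rate. Applying the induction hypothesis to $\bar\Gamma$ yields
$$GR(\alpha\text{ on }\Gamma/\Gamma_t)\ \le\ \max_{1\le k\le t-1}GR(\alpha\text{ on }\bar\Gamma_k/\bar\Gamma_{k+1})\ =\ \max_{1\le k\le t-1}GR(\alpha\text{ on }\Gamma_k/\Gamma_{k+1}),$$
and combining this with the previous display gives $GR(\alpha\text{ on }\Gamma)\le\max_{1\le k\le t}GR(\alpha\text{ on }\Gamma_k/\Gamma_{k+1})$, completing the induction.

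All the genuine content sits in Theorem~\ref{normal}; the remaining steps are bookkeeping, and the one place to be careful is the identification of the lower central series of $\Gamma/\Gamma_t$ with the shortened image of that of $\Gamma$, which is exactly where an off-by-one in the indices could slip in. It is also worth checking that $GR(\alpha\text{ on }H)$ in Theorem~\ref{normal} denotes the growth rate of the restriction $\alpha|_H$, so that it really equals $GR(\alpha\text{ on }\Gamma_t/\Gamma_{t+1})$ — this is fine because $\alpha(\Gamma_t)\subseteq\Gamma_t$. Alternatively, the inequality can be read off in one line from the formula $GR(\alpha)=\max_k GR(\alpha\text{ on }\Gamma_k/\Gamma_{k+1})^{1/k}$ established earlier, since $x^{1/k}\le\max_j x_j$ whenever all the $x_j$ lie in $\{0\}\cup[1,\infty)$, and the growth rate of an endomorphism of a finitely generated abelian group never lies in the open interval $(0,1)$ (by the eigenvalue description of the previous subsection: if the largest modulus of an eigenvalue of the associated integral matrix were in $(0,1)$, the product of the nonzero eigenvalues — a nonzero integer in absolute value — would have modulus less than $1$). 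The inductive argument is the cleaner one because it sidesteps this case analysis.
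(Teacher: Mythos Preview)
Your proof is correct and follows essentially the same strategy as the paper: both repeatedly apply Theorem~\ref{normal} along the lower central series, with the paper iterating ``bottom-up'' (using $\Gamma_{j+1}\lhd\Gamma_j$ for $j=t,t-1,\ldots,1$) while you induct ``top-down'' (using $\Gamma_t\lhd\Gamma$ and the induction hypothesis on $\Gamma/\Gamma_t$). Your write-up is in fact more careful than the paper's in verifying the hypotheses of Theorem~\ref{normal} (finite generation and $\alpha$-invariance of the subgroups involved).
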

\begin{proof}
\begin{eqnarray*}
GR(\alpha \text{ on } \Gamma_t) &\leq& \max\{ GR(\alpha \text{ on
} \Gamma_{t+1}), GR(\alpha \text{ on }
{\Gamma_t}/{\Gamma_{t+1}})\}\\
GR(\alpha \text{ on } \Gamma_{t-1}) &\leq& \max\{GR(\alpha \text{
on } {\Gamma_t}/{\Gamma_{t+1}}), GR(\alpha \text{ on }
{\Gamma_{t-1}}/{\Gamma_{t}})\}\\
GR(\alpha \text{ on } \Gamma_{t-2}) &\leq& \max\{GR(\alpha \text{
on } \Gamma_{t-1}), GR(\alpha \text{ on }
{\Gamma_{t-2}}/{\Gamma_{t-1}})\}\\
GR(\alpha \text{ on } \Gamma_{t-2}) &\leq& \max\{GR(\alpha \text{
on } \Gamma_{t}), GR(\alpha \text{ on }
{\Gamma_{t-1}}/{\Gamma_t}), GR(\alpha \text{ on }
{\Gamma_{t-2}}/{\Gamma_{t-1}})\}\\
&\vdots& \\
GR(\alpha \text{ on } \Gamma_{1}) &\leq& \max\{GR(\alpha \text{ on
} {\Gamma_{1}}/{\Gamma_{2}}), GR(\alpha \text{ on }
{\Gamma_{2}}/{\Gamma_3}), \cdots, GR(\alpha \text{ on }
{\Gamma_{t-1}}/{\Gamma_{t}}),\\
&&GR(\alpha \text{ on } \Gamma_{t})\}.
\end{eqnarray*}
\end{proof}

\section{Group Products}

For direct and free products there is an immediate relationship between the growth of an endomorphism on the whole group and on the factors.

\subsection{Direct Products and Amalgams}

\begin{lemma}
Let $\Gamma =A \times B$ be a direct product of two finitely generated groups $A$ and $B$, Suppose that $\alpha$ is an endomorphism from $\Gamma$ to $\Gamma$   such that either $\alpha(A) \subset A$ or $\alpha(B) \subset B$. Then
\begin{eqnarray*}
GR(\alpha \text{ on } \Gamma) = max \{(GR(\alpha \text{ on } A), GR(\alpha \text{ on } B)\}.
\end{eqnarray*}
\end{lemma}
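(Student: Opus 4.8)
The plan is to recognise this as an instance of the Corollary to Theorem~\ref{normal}, applied to the direct factor $A$. By the obvious symmetry it suffices to treat the case $\alpha(A)\subseteq A$. I would first fix finite generating sets $S_A$ of $A$ and $S_B$ of $B$ and take $S=S_A\cup S_B$ as the generating set of $\Gamma=A\times B$. With respect to $S$, the subgroup $A$ has a generating system contained in $S$ (namely $S_A$), so it ``has a complement relative to $S$'' in the sense used just before the Corollary, and the images of $S_B$ generate $\Gamma/A$.

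Next I would verify the one routine point, that $A$ is undistorted in $\Gamma$. The projection $\rho\colon\Gamma\to A$ that kills the $B$-coordinate is a homomorphic retraction onto $A$; applying $\rho$ to a geodesic $S$-word representing $w\in A$ simply deletes the $S_B$-letters, leaving an $S_A$-word of no greater length that still represents $w$. Hence $|w|_A=|w|_\Gamma$ for every $w\in A$, so $A$ is not distorted. Together with $\alpha(A)\subseteq A$ and the normality of $A$ in $\Gamma$, the hypotheses of the Corollary to Theorem~\ref{normal} are satisfied, and it gives
$$GR(\alpha \text{ on } \Gamma)=\max\{GR(\alpha|_A),\,GR(\alpha \text{ on } \Gamma/A)\}.$$

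It remains to replace $\Gamma/A$ by $B$. The projection $p_B\colon\Gamma\to B$ has kernel $A$ and so induces an isomorphism $\Gamma/A\cong B$; since $\alpha(A)\subseteq A$ and $A\trianglelefteq\Gamma$, $\alpha$ descends to an endomorphism of $\Gamma/A$, which under this isomorphism becomes the endomorphism $\beta=p_B\circ\alpha|_B$ of $B$. This $\beta$ is exactly what the statement denotes by writing ``$GR(\alpha \text{ on } B)$'' (and when in addition $\alpha(B)\subseteq B$ one has $\beta=\alpha|_B$, so there is nothing to interpret). As $B$ is finitely generated, $GR(\alpha \text{ on } \Gamma/A)=GR(\beta \text{ on } B)$, and substituting yields the asserted equality. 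The case $\alpha(B)\subseteq B$ follows by interchanging the roles of $A$ and $B$.

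I do not expect a genuine obstacle: all the force of the statement lies in Theorem~\ref{normal} and its Corollary. The only two things needing care are the undistortion of the direct factor, which the retraction argument disposes of, and the meaning of the term ``$GR(\alpha \text{ on } B)$'' when $\alpha$ does not literally preserve $B$, which is settled by the canonical isomorphism $\Gamma/A\cong B$.
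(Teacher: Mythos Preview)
Your argument is correct and follows essentially the same route as the paper: reduce the direct-product lemma to Theorem~\ref{normal} applied with the invariant factor as the normal subgroup, and identify the quotient with the other factor. The paper's proof is terser---it simply writes down the two-sided sandwich
\[
GR(\alpha \text{ on } A)\le GR(\alpha \text{ on }\Gamma)\le \max\{GR(\alpha \text{ on }A),\,GR(\alpha \text{ on }B)\}
\]
(and the symmetric line), citing Theorem~\ref{normal} for the right inequality and implicitly using Lemma~3.1 and the ``complement'' remark for the left; it does not pass through the Corollary and so never mentions distortion. Your version packages this via the Corollary and is a little more careful in two places the paper glosses over: the undistortion of a direct factor (your retraction argument) and the interpretation of ``$GR(\alpha\text{ on }B)$'' as the induced map on $\Gamma/A\cong B$ when $\alpha$ does not literally preserve $B$. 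These are cosmetic differences rather than a genuinely different strategy.
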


\begin{proof}
Using Theorem \ref{normal} we have:
\begin{eqnarray*}
GR(\alpha \text{ on } A) \leq GR(\alpha \text{ on } \Gamma) \leq max \{ GR(\alpha \text{ on } B), GR(\alpha \text{ on } A)\}\\
GR(\alpha \text{ on } B) \leq GR(\alpha \text{ on } \Gamma) \leq max \{ GR(\alpha \text{ on } B), GR(\alpha \text{ on } B)\}.
\end{eqnarray*}
\end{proof}

Trivially the same observation works for free products provided the endomorphism maps the factors to themselves.

\begin{lemma}
Let $\Gamma = A \star B$ be a free product of two finite generated groups $A$ and $B$. Suppose that $\alpha$ is an endomorphism from $\Gamma$ to $\Gamma$   such that $\alpha(A) \subset A$ and $\alpha(B) \subset B$. Then
\begin{eqnarray*}
GR(\alpha \text{ on } \Gamma) = \max \{(GR(\alpha \text{ on } A), GR(\alpha \text{ on } B)\}.
\end{eqnarray*}
\end{lemma}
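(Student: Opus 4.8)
The plan is to reduce the free-product case to the two inequalities that bracket $GR(\alpha \text{ on } \Gamma)$ between $\max\{GR(\alpha\text{ on }A),GR(\alpha\text{ on }B)\}$ and itself, exactly as in the direct-product lemma, but being careful that free products lack the normal-subgroup structure invoked through Theorem \ref{normal}. So I would argue both inequalities directly from the length function on the free product.

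First I would fix finite generating sets $S_A$ for $A$ and $S_B$ for $B$, so that $S=S_A\cup S_B$ generates $\Gamma=A\star B$. For the lower bound, observe that since $\alpha(A)\subset A$, the endomorphism $\alpha$ restricts to $A$, and for $a\in A$ the word length $|a|_S$ computed in $\Gamma$ coincides with the word length $|a|_{S_A}$ computed in $A$ (a reduced word in $S_A\cup S_A^{-1}$ representing $a$ cannot be shortened by introducing letters from $B$, since in a free product a nonempty alternating syllable sequence is never trivial). Hence $GR(\alpha\text{ on }A)\le GR(\alpha\text{ on }\Gamma)$, and symmetrically for $B$, giving $\max\{GR(\alpha\text{ on }A),GR(\alpha\text{ on }B)\}\le GR(\alpha\text{ on }\Gamma)$.

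For the upper bound I would use Theorem \ref{generalised}(1): it suffices to bound $|\alpha^m s|_S$ for each generator $s\in S$. If $s\in S_A$ then $\alpha^m(s)\in A$ and $|\alpha^m(s)|_S=|\alpha^m(s)|_{S_A}\le K_m^{(A)}$, the corresponding quantity for $\alpha|_A$; likewise for $s\in S_B$. Therefore $K_m\le\max\{K_m^{(A)},K_m^{(B)}\}$, and taking $m$-th roots and letting $m\to\infty$ (using that each sequence converges by Theorem \ref{generalised}(1)) gives $GR(\alpha\text{ on }\Gamma)=\lim_m K_m^{1/m}\le\max\{GR(\alpha\text{ on }A),GR(\alpha\text{ on }B)\}$. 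Combining the two inequalities yields equality.

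The only real point needing care — and the step I would call the main obstacle, though it is a mild one — is the claim that word length of an element of a free factor is the same whether measured in the factor or in the whole free product. This is the standard fact that free factors are undistorted (indeed isometrically embedded with respect to these generating sets), which follows from the normal form theorem for free products: any expression of $a\in A$ as a product of generators from $S_A\cup S_B$ and their inverses, after freely reducing, must collapse all $B$-syllables, and the surviving $A$-part has length at least $|a|_{S_A}$. Once that is in hand, both halves of the argument are immediate, and no analogue of the distortion hypothesis appearing in the earlier corollary is needed here.
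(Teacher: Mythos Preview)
Your argument is correct. The paper actually gives no proof of this lemma at all: it simply precedes the statement with the remark that ``trivially the same observation works for free products provided the endomorphism maps the factors to themselves,'' and leaves it at that. So there is no paper-side proof to compare against; what you have written is a careful filling-in of what the authors leave implicit.

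Your instinct to avoid invoking Theorem~\ref{normal} directly is well placed, since neither free factor is normal in $A\star B$, and your route via Theorem~\ref{generalised}(1) together with the fact that free factors are isometrically embedded (with respect to the union of the factor generating sets) is exactly the clean way to do it. One could alternatively get the lower bound from Lemma~3.1 applied to the retractions $A\star B\to A$ and $A\star B\to B$, but the upper bound still needs precisely the undistortion observation you make, so nothing is saved.
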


\subsection{Semidirect Products}

The behavior of endomorphisms on semidirect products is more complicated than on direct products, Here we first recall the definition of semidirect product of two groups and
the normal form for elements in such products.

\begin{definition} If $H$ and $Q$ are groups then their {\bf semidirect product} is the group
\begin{eqnarray*}
&& \Gamma = H \rtimes_{\phi} Q \;\; (\phi: Q \rightarrow \text{Aut} H)\\
&& (h, q) (h', q') = (h \phi(q) h', q q')
\end{eqnarray*}
where $\phi$ is a homomorphism from $Q$ to $H$.
\end{definition}

Inductively we calculate the product of $n$ elements of $\Gamma$
as follows:
\begin{eqnarray*}
&& (h_1, q_1)(h_2, q_2) \cdots (h_n, q_n) \\
&& = (h_1 \phi(q_1) h_2 \phi(q_1 q_2) h_3, q_1 q_2 q_3)
\cdots (h_n, q_n)\\
&& = (h_1 \phi(q_1) h_2 \phi(q_1 q_2) h_3 \phi(q_1 q_2 q_3) h_4,
q_1 q_2 q_3 q_4) \cdots (h_n, q_n)\\
&& = (h_1 \phi(q_1) h_2 \phi(q_1 q_2) h_3 \phi(q_1 q_2 q_3) h_4
\cdots \phi(q_1 q_2 \cdots q_{n-1}) h_n, q_1 q_2 \cdots q_n)\\
&& = (h_1 \phi(q_1) h_2 \phi(q_1)\phi(q_2) h_3 \phi(q_1)\phi(q_2)
\phi(q_3) h_4 \cdots \phi(q_1) \phi(q_2) \cdots \phi(q_{n-1}) h_n,
q_1 q_2 \cdots q_n)
\end{eqnarray*}

If both factors, $A$ and $B$, in a semidirect product are abelian groups it is straightforward to compute the growth rate of an endomorphism.

\begin{theorem}
Let $\alpha$ be and endomorphism from the finitely generated group $\Gamma$ to $\Gamma$ where $\Gamma = A \rtimes B$ where $A$ and $B$ are finitely generated abelian groups with $\alpha(A) \subset A$.
Then $GR(\alpha \text{ on } \Gamma)= max \{|\lambda_1|, \cdots, |\lambda_r|, |\mu_1|, \cdots, |\mu_t|\}$, where $\lambda_1, \cdots, \lambda_r$ are the eigenvalues of $\alpha \otimes 1$ on $A \otimes \mathbb{C}$ and $\mu_1, \cdots, \mu_t$ are the eigenvalues of $\alpha \otimes 1$ on $B \otimes \mathbb{C}$.
\end{theorem}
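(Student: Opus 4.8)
The plan is to reduce everything to the abelian case (Theorem 4.1) using the extension $1\to A\to\Gamma\to\Gamma/A\to1$ together with the comparison bounds for a normal subgroup. First I would record the structural facts. As the first factor of the semidirect product, $A$ is normal in $\Gamma$ and $\Gamma/A\cong B$; the hypothesis $\alpha(A)\subseteq A$ ensures both that $\alpha$ restricts to an endomorphism $\alpha|_A$ of $A$ and that $\alpha$ descends to an endomorphism of $\Gamma/A$, and under the isomorphism $\Gamma/A\cong B$ this descended map is precisely the endomorphism of $B$ whose complexified eigenvalues are $\mu_1,\dots,\mu_t$. This identification is routine but must be made explicitly, since it is the one place the hypothesis $\alpha(A)\subseteq A$ is used on the quotient side. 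Now Theorem 4.1 applies at both ends of the extension: $GR(\alpha|_A)=\max_i|\lambda_i|$ and $GR(\alpha\text{ on }\Gamma/A)=\max_j|\mu_j|$.

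The upper bound is then immediate: applying Theorem \ref{normal} to the normal subgroup $A$ gives
$$GR(\alpha\text{ on }\Gamma)\le\max\{GR(\alpha|_A),\,GR(\alpha\text{ on }\Gamma/A)\}=\max\{|\lambda_1|,\dots,|\lambda_r|,\,|\mu_1|,\dots,|\mu_t|\}.$$
For the lower bound I would handle the two contributions separately. The quotient contribution, $GR(\alpha\text{ on }\Gamma/A)\le GR(\alpha\text{ on }\Gamma)$, is the quotient inequality established just before Theorem \ref{normal}. For the subgroup contribution I would take as generating set of $\Gamma$ the union of a generating set of $A$ and a generating set of $B$; then $A$ has a complement relative to this generating system in the sense of the remark preceding Corollary 3.1, and that remark yields $GR(\alpha|_A)\le GR(\alpha\text{ on }\Gamma)$. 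Combining, $\max\{|\lambda_i|,|\mu_j|\}\le GR(\alpha\text{ on }\Gamma)$, which with the upper bound gives the claimed equality.

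The step I expect to be the genuine obstacle is the subgroup inequality $GR(\alpha|_A)\le GR(\alpha\text{ on }\Gamma)$. A word in the generators of $A$ that is long when measured inside $A$ can be dramatically shorter when measured inside $\Gamma$ — conjugating by elements of $B$ can contract it — so this inequality amounts to the assertion that $A$ is not exponentially distorted in $\Gamma$, the very phenomenon isolated by the non-distortion hypothesis of Corollary 3.1. When $\phi\colon B\to\mathrm{Aut}(A)$ has finite image (or, more generally, whenever $A$ is undistorted) the argument above goes through unchanged; but if $\phi$ contains a hyperbolic automorphism then $A$ is exponentially distorted and $GR(\alpha\text{ on }\Gamma)$ can drop strictly below $\max_i|\lambda_i|$, so a clean statement should either carry the extra hypothesis ``$A$ undistorted'', exactly as Corollary 3.1 does, or assert only the unconditional upper bound. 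Everything else — the structural identifications and the appeals to Theorem 4.1, Theorem \ref{normal}, and the quotient inequality — is routine.
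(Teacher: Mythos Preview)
Your approach is exactly the paper's: invoke Theorem~4.1 on $A$ and on $\Gamma/A\cong B$, use Theorem~\ref{normal} for the upper bound, and use the quotient inequality (Lemma~3.1) for $\max_j|\mu_j|\le GR(\alpha)$. In fact you have gone further than the paper. The paper's displayed proof only records
\[
GR(\alpha\text{ on }B)\le GR(\alpha\text{ on }\Gamma)\le\max\{GR(\alpha\text{ on }A),GR(\alpha\text{ on }B)\},
\]
which is precisely the sandwich $\max_j|\mu_j|\le GR(\alpha)\le\max\{|\lambda_i|,|\mu_j|\}$; it never supplies the missing inequality $\max_i|\lambda_i|\le GR(\alpha)$ needed for the asserted equality. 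You have correctly isolated this as the real issue and correctly diagnosed why: it is the undistortedness hypothesis that Corollary~3.1 carries and that the present theorem omits.

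Your concern is not hypothetical. Take $\Gamma=\mathbb{Z}^2\rtimes_\phi\mathbb{Z}$ with $\phi=\begin{pmatrix}2&1\\1&1\end{pmatrix}$ and let $\alpha$ be conjugation by the generator $t$ of the $\mathbb{Z}$ factor. Then $\alpha(A)=A$, the eigenvalues on $A\otimes\mathbb{C}$ are $(3\pm\sqrt{5})/2$ so $\max_i|\lambda_i|=(3+\sqrt{5})/2>1$, while the induced map on $\Gamma/A\cong\mathbb{Z}$ is the identity so $\max_j|\mu_j|=1$. But an inner automorphism has growth rate $1$, so $GR(\alpha)=1<\max\{|\lambda_i|,|\mu_j|\}$. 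This is exactly the exponential-distortion phenomenon the paper itself flags in \S5.3. So your instinct is right: as written, the theorem needs an undistortedness hypothesis on $A$ (or should assert only the upper bound together with $\max_j|\mu_j|\le GR(\alpha)$), and the paper's proof, like yours, does not and cannot close the gap without it.
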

\begin{proof}
\begin{eqnarray*}
&& GR(\alpha \text{ on } B) \leq GR(\alpha \text{ on } \Gamma) \leq \text{ max }\{GR(\alpha \text{ on } A), GR(\alpha \text{ on } B)\}\\
&& max \{|\mu_1|, \cdots, |\mu_t|\} \leq GR(\alpha \text{ on } \Gamma) \leq \text { max }\{|\lambda_1|, \cdots, |\lambda_r|, |\mu_1|, \cdots, |\mu_t|\}
\end{eqnarray*}
\end{proof}

If $\Gamma$ is a cyclic group then the growth rate of an endomorphism on $\Gamma$ is clearly an integer, the eigenvalue for the generator. Using the arguments above, this can be extended to endomorphisms that preserve a polycyclic series. Recall that a
group is called polycyclic if there exists a polycyclic series
for the group, that is a subnormal series of finite length
with cyclic factors.  That is a subnormal series
\begin{eqnarray*}
     1 = P_h \triangleleft P_{h-1} \triangleleft \cdots \triangleleft P_1 =
     \Gamma
\end{eqnarray*}
such that $ {P_{i-1}}/{P_i}$ is cyclic. The integer $h$ is the length of the polycyclic series.   We assume that we have a polycyclic set of generators for $\Gamma$ so that the generators of each $P_i$ are included among the generators of $\Gamma$.
If an endomorphism preserves each $P_i$ we say that it preserves the polycyclic series.

\begin{lemma} Let $\Gamma$ be a finitely generated polycyclic group.  If an endomorphism $\alpha$ on $\Gamma$ preserves a polycyclic series then the growth rate of $\alpha$ is an integer.
\end{lemma}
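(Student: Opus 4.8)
The plan is to argue by induction on the length $h$ of the polycyclic series $1 = P_h \triangleleft P_{h-1} \triangleleft \cdots \triangleleft P_1 = \Gamma$, using Theorem \ref{normal} together with the quotient lemma to relate $GR(\alpha)$ to the growth rates on $P_2$ and on the cyclic group $\Gamma/P_2$. When $h = 1$ the group $\Gamma$ is cyclic, and here $GR(\alpha)$ is plainly an integer: if $\Gamma$ is infinite cyclic with generator $g$ then $\alpha(g) = g^m$ for some $m \in \mathbb{Z}$, so $|\alpha^n(g)| = |m|^n$ and $GR(\alpha) = |m|$; if $\Gamma$ is finite then all $|\alpha^n(\gamma)|$ are bounded and $GR(\alpha) = 1$ under the standing convention that only $r>1$ are tested (equivalently $H_r = \Gamma$ for every $r>1$).

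For the inductive step, $P_2$ is normal in $\Gamma = P_1$, is itself polycyclic, hence finitely generated, and carries the $\alpha$-invariant polycyclic series $1 = P_h \triangleleft \cdots \triangleleft P_2$ of length $h-1$; by our standing assumption on polycyclic generating sets it has a generating set contained in that of $\Gamma$, and $\alpha(P_2)\subseteq P_2$, so $\alpha$ restricts to $P_2$ and descends to $\Gamma/P_2 = P_1/P_2$. By the inductive hypothesis $GR(\alpha|_{P_2})$ is an integer, and $GR(\alpha \text{ on } \Gamma/P_2)$ is an integer by the $h=1$ case. Theorem \ref{normal} then gives
$$GR(\alpha) \leq \max\{GR(\alpha|_{P_2}),\, GR(\alpha\text{ on }\Gamma/P_2)\},$$
so $GR(\alpha)$ is at least bounded above by an integer, while the quotient lemma gives the lower bound $GR(\alpha)\geq GR(\alpha\text{ on }\Gamma/P_2)$.

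The genuine obstacle is that this does not yet pin $GR(\alpha)$ down to an exact integer, because a restriction can grow strictly faster than the ambient endomorphism when the subgroup is distorted: in the Heisenberg example above $GR(\phi|_{\Gamma_2}) = 4$ while $GR(\phi) = 2$, the discrepancy being exactly the quadratic distortion of the centre, so one cannot simply assert equality in the displayed inequality. To close the gap I would, running alongside the induction, record for each infinite cyclic factor $P_{i-1}/P_i$ both the integer eigenvalue $m_i$ by which $\alpha$ acts on it and the distortion degree $d_i$ of the corresponding cyclic subgroup inside $\Gamma$ — which in the polycyclic setting relevant here is a positive integer — and prove the sharp identity $GR(\alpha) = \max_i |m_i|^{1/d_i}$, with finite factors contributing $1$. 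One direction refines the upper-bound computation in the proof of Theorem \ref{normal} by inserting the distortion exponents; the other exhibits, for the index $i$ attaining the maximum, an element $\gamma \in P_{i-1}$ whose iterates satisfy $|\alpha^n(\gamma)| \asymp |m_i|^{n/d_i}$.

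Finally one must check that $\max_i |m_i|^{1/d_i}$ is always a rational integer. An $\alpha$-invariant polycyclic series forces the action of $\alpha$ on each rationalised graded piece to be triangular over $\mathbb{Q}$, so each $m_i$ is a genuine $\mathbb{Z}$-eigenvalue; moreover the factors of largest distortion degree carry eigenvalues that are $d_i$-fold products of the eigenvalues occurring in the undistorted (degree-one) factors — exactly the phenomenon behind $\phi([x,y]) = [\phi(x),\phi(y)]$ in the Heisenberg computation — so $|m_i|^{1/d_i}$ never exceeds the largest eigenvalue from a degree-one factor, and that bound is attained there. I expect this last step to be the hard part: tracking the distortion degrees through the series and establishing the ``product of eigenvalues'' structure in the general polycyclic (not merely nilpotent) case. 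For nilpotent $\Gamma$ it is already contained in our computation of $GR(\alpha)$ on finitely generated nilpotent groups, via the identification $\Gamma_1/\Gamma_2 = \Gamma^{\mathrm{ab}}$, which reduces the whole question to the spectral radius of $\alpha$ on the rationalised abelianisation.
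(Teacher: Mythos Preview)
Your inductive skeleton is exactly the paper's: work up the polycyclic series, use Theorem~\ref{normal} for the upper bound and the quotient lemma for the lower bound, and conclude that $GR(\alpha)=\max\{GR(\alpha|_{A}),GR(\alpha\text{ on }\Gamma/A)\}$ at each step. The paper treats the cyclic-by-cyclic case, asserts the key inequality $GR(\alpha|_A)\le GR(\alpha)$ from the bare fact that the generator of $A$ lies in the generating set of $\Gamma$ (the ``complement'' condition introduced just before Corollary~3.1), and then writes that ``the general result then follows by induction on the length of the polycyclic series.'' No further argument is given.

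You have put your finger on a genuine problem that the paper simply does not address. The implication ``generators of $A$ lie among generators of $\Gamma$ $\Rightarrow$ $GR(\alpha|_A)\le GR(\alpha)$'' is precisely the content of Corollary~3.1, and that corollary carries the hypothesis that $H$ is \emph{not distorted}. Your Heisenberg computation shows the hypothesis cannot be dropped: the centre $\Gamma_2=\langle z\rangle$ has its generator among the generators of $\Gamma$, yet $GR(\phi|_{\Gamma_2})=4>2=GR(\phi)$. In the cyclic-by-cyclic base case this is harmless, since such groups are virtually abelian and the cyclic normal subgroup is undistorted; but the inductive step requires the same inequality for $P_2$ (or $P_{h-1}$) inside a general polycyclic $\Gamma$, and there distortion certainly occurs. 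Running the paper's argument on the Heisenberg series $1\triangleleft\langle z\rangle\triangleleft\langle y,z\rangle\triangleleft\Gamma$ with your $\phi$ gives $\max\{GR(\phi|_{P_2}),GR(\phi\text{ on }\Gamma/P_2)\}=\max\{4,2\}=4\neq 2=GR(\phi)$, so the displayed equality the paper relies on is false in the inductive step.

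Your proposed repair --- recording distortion degrees $d_i$ and proving $GR(\alpha)=\max_i|m_i|^{1/d_i}$, then arguing that the maximum is always achieved at a degree-one factor --- is a reasonable programme, and for nilpotent groups it is essentially the corrected Bowen formula proved earlier in the paper. But as you yourself say, it is not carried out here: the ``product of eigenvalues'' structure in the general polycyclic (non-nilpotent) case is asserted, not established, and it is exactly the point where the difficulty lies. So your proposal is not yet a proof; it is, however, more honest than the paper's, which passes over the same gap in a single sentence.
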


\begin{proof} The simplest situation is when $\Gamma$ is a cyclic-by-cyclic group,  that is $\Gamma$ has a cyclic normal subgroup $A$ with cyclic quotient $\Gamma /A$.  Let $\alpha$ be an endomorphism of $\Gamma$ which preserves this polycyclic series so that $\alpha(A) \subset A$. By assumption the generator of $A$ is part of the generating system of $\Gamma$ and hence
$$GR(\alpha \text{ on } A) \le GR(\alpha) \le \max\{GR(\alpha \text{ on } A), GR(\alpha \text{ on } \Gamma/A)\}$$
We always have
$$GR(\alpha \text{ on } \Gamma/A) \le GR(\alpha)$$
Therefore here
$$GR(\alpha) = \max\{GR(\alpha \text{ on } A), GR(\alpha \text{ on } \Gamma /A)\}.$$  Since both groups are cyclic, it follows that both growth rates are integers, and hence $GR(\alpha)$ is an integer.  The general result then follows by induction on the length of the polycyclic series.
\end{proof}

\subsection{Semidirect Products and the Distortion Function}

The growth rate of an endomorphism is one of many similar growth functions for a finitely generated group. For a finitely generated group $\Gamma$, Gromov has defined the \textbf{distortion function} $\rho(n)$ of a finitely generated subgroup $H \subset \Gamma$ as follows.  For $n \in \mathbb{N}$ the value
 $\rho(n)$ is the radius of the set of vertices in the Cayley graph of $H$ that are a
distance at most $n$ from the identity in $\Gamma$ (See [Br 1] for more information).

For example if $\phi \in GL(r, \mathbb{Z})$ has an eigenvalue of absolute
value greater than $1$ then $\mathbb{Z}^r$ is exponentially
distorted in $\mathbb{Z}^r \rtimes_\phi \mathbb{Z}$.

In the case of a semidirect product we can relate  $\rho(n)$ to  a type of growth rate, which equals the maximal growth rate of the $\phi(q)$ for all $q$ if $Q$ is abelian.

\begin{lemma}
Let $\Gamma = H
\rtimes_{\phi} Q \;\; (\phi: Q \rightarrow \text{Aut} H)$ be a semidirect product.
Let $\Gamma_H $ and $\Gamma_Q$ be generating sets for  $H$ and $Q$. Write
$$K_m = \max_{(q_{i_k} \in \Gamma_Q \cup \Gamma^{-1} _Q,
h \in \Gamma_H \cup \Gamma^{-1} _H)}
\{|\phi(q_{i_1})\phi(q_{i_2}) \cdots \phi(q_{i_m}) h|_H\}.$$
Then the limit
$$K = \lim_{m \rightarrow \infty} {K_m}^{1/m}$$
exists and is independent of the generating sets.
Moreover, if $Q$ is abelian then
$$K = \max_{q \in Q} GR (\phi(q)).$$
Writing  $\rho(n)$ for the distortion function of $H \subset \Gamma$ we have
$$\lim_{n \to \infty}\rho(n)^{1/n}= K^{1/2}.$$
 \end{lemma}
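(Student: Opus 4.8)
The plan is to prove the three assertions in turn: existence of the limit $K$ (with its independence of the generating data), the identification of $K$ with a maximal growth rate when $Q$ is abelian, and the distortion identity. For the first, note that since $\phi$ is a homomorphism one has $\phi(q_{i_1})\cdots\phi(q_{i_m})=\phi(q_{i_1}\cdots q_{i_m})$, so every competitor in the definition of $K_m$ has the form $\psi(h)$ with $\psi\in\phi(Q)\le\operatorname{Aut}H$ a product of $m$ of the automorphisms $\phi(q_i^{\pm1})$ and $h\in\Gamma_H\cup\Gamma_H^{-1}$. Writing $\|\psi\|=\max_{h\in\Gamma_H\cup\Gamma_H^{-1}}|\psi(h)|_H$, I would use the elementary inequality $|\psi(g)|_H\le|g|_H\,\|\psi\|$ (apply $\psi$ to a geodesic word for $g\in H$) and split a product $\psi=\psi_1\psi_2$ of $m+n$ factors after the first $m$ to get $|\psi(h)|_H\le\|\psi_1\|\,|\psi_2(h)|_H\le K_mK_n$, hence $K_{m+n}\le K_mK_n$. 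Since $1\le K_m\le K_1^{\,m}$, the subadditivity (Fekete) argument used in the proof of Theorem~\ref{generalised} shows that $K=\lim_mK_m^{1/m}=\inf_mK_m^{1/m}$ exists and is finite. Replacing $\Gamma_H$ by another finite generating set rescales every $|\cdot|_H$ by a bounded factor $C$, so $C^{-2}K_m\le K'_m\le C^2K_m$ and $K$ is unchanged; thus $K$ depends only on the chosen generating set $\Gamma_Q$ of $Q$.

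When $Q$ is abelian the homomorphism property shows that a product of $m$ factors $\phi(q_i^{\pm1})$ equals $\phi(q)$ for some $q\in Q$ admitting a length-$m$ word over $\Gamma_Q\cup\Gamma_Q^{-1}$, so that $K_m=\max\{|\phi(q)(h)|_H:q\in Q \text{ has such a word},\ h\in\Gamma_H\cup\Gamma_H^{-1}\}$, which is comparable (up to $|q|_Q\le m$ and the parity of the available words) to the maximum over the ball $|q|_Q\le m$. The automorphisms $\{\phi(q):q\in Q\}$ pairwise commute, so, exactly as in the abelian-group computation earlier in the paper, after tensoring $H$ with $\mathbb{C}$ they are simultaneously brought to triangular form; if $\mu_j(q)$ are the eigenvalues of $\phi(q)\otimes1$ then $|\phi(q)(h)|_H$ is comparable, up to polynomial factors, to $\max_j|\mu_j(q)|$, and the exponents $\log|\mu_j(\cdot)|$ are additive in $q$ on a minimal generating set. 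Maximising $\max_j|\mu_j(q)|$ over $|q|_Q\le m$ is then a linear optimisation whose optimal rate per unit word length is attained at a generator, giving $K=\max_{q\in\Gamma_Q\cup\Gamma_Q^{-1}}\max_j|\mu_j(q)|=\max_{q\in\Gamma_Q\cup\Gamma_Q^{-1}}GR(\phi(q))$. (The maximum must be read over a fixed generating set of $Q$: since $GR(\phi(q^k))=GR(\phi(q))^k$ by Theorem~\ref{generalised}(3), a supremum over all of $Q$ would be infinite whenever some $\phi(q)$ grows, and adding redundant generators to $\Gamma_Q$ alters $K$.)

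For the distortion identity I would take on $\Gamma$ the generating set $\Gamma_H\cup\Gamma_Q$, identifying $H$ with $H\times\{1\}$ and $Q$ with $\{1\}\times Q$, so that $\rho(n)=\max\{|h|_H:h\in H,\ |h|_\Gamma\le n\}$ is non-decreasing in $n$. \emph{Lower bound:} choose $q^\ast\in Q$ and $h^\ast\in\Gamma_H\cup\Gamma_H^{-1}$ realising $K_L$ (so $|q^\ast|_Q\le L$); then $(\phi(q^\ast)(h^\ast),1)=(1,q^\ast)(h^\ast,1)(1,q^\ast)^{-1}$ has $\Gamma$-length at most $2|q^\ast|_Q+1\le2L+1$ and $H$-length $K_L$, so $\rho(2L+1)\ge K_L$, and taking $(2L+1)$-th roots as $L\to\infty$ gives $\liminf_n\rho(n)^{1/n}\ge K^{1/2}$. \emph{Upper bound:} take $h\in H$ with $|h|_\Gamma\le n$ and a geodesic word $w=u_0t_1u_1\cdots t_ru_r$ for it, with $t_j\in\Gamma_Q\cup\Gamma_Q^{-1}$ its $Q$-letters and $u_i$ words in $\Gamma_H\cup\Gamma_H^{-1}$, so $\sum_i\mathrm{len}(u_i)+r\le n$. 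Putting $w$ in normal form gives
\[
h=\bar u_0\,\phi(p_1)(\bar u_1)\cdots\phi(p_{r-1})(\bar u_{r-1})\,\bar u_r,\qquad p_i=t_1\cdots t_i,\quad p_r=1_Q,
\]
and the crucial point is that, since $p_r=1_Q$, each partial product has $|p_i|_Q\le\min(i,r-i)\le\lfloor r/2\rfloor\le\lfloor n/2\rfloor$, whence $\|\phi(p_i)\|\le K_{|p_i|_Q}\le\max_{0\le j\le\lfloor n/2\rfloor}K_j$ and
\[
|h|_H\le\sum_{i=0}^r\mathrm{len}(u_i)\,\|\phi(p_i)\|\le n\max_{0\le j\le\lfloor n/2\rfloor}K_j.
\]
Since $K_j^{1/j}\to K\ge1$, for each $\varepsilon>0$ the right-hand side is at most $n(K+\varepsilon)^{n/2}$ for large $n$, so $\limsup_n\rho(n)^{1/n}\le(K+\varepsilon)^{1/2}$; letting $\varepsilon\to0$ and combining with the lower bound yields $\lim_n\rho(n)^{1/n}=K^{1/2}$.

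I expect the main obstacle to be the upper bound in the last step: one must see that, although a geodesic word for an element of $H$ may spend up to $n$ letters in $\Gamma_Q$, the conjugating automorphisms $\phi(p_i)$ that appear when the word is normalised are controlled by the mid-path distance $\lfloor r/2\rfloor$ rather than by $r$ — and it is precisely this halving, forced by $p_r=1_Q$, that produces the exponent $1/2$. A secondary, bookkeeping difficulty is to state parts one and two correctly: $K$ does in fact depend on $\Gamma_Q$, and the ``$\max_{q\in Q}$'' in the abelian formula must be understood as a maximum over generators (equivalently, over $q$ of unit word length).
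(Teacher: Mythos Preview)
Your arguments for the existence of $K$ and for the distortion identity $\lim_n\rho(n)^{1/n}=K^{1/2}$ are correct and follow the same route as the paper. For the limit you use the full submultiplicativity $K_{m+n}\le K_mK_n$ and Fekete, whereas the paper uses the weaker $K_{mn}\le K_n^{\,m}$; both suffice. For the distortion, the paper does exactly what you do: the lower bound comes from the conjugate $(1,q_1)\cdots(1,q_r)(h,1)(1,q_r)^{-1}\cdots(1,q_1)^{-1}$, and the upper bound from the observation that a geodesic for an element of $H$ has $Q$-prefix product $p_r=1_Q$, so each $p_i$ has $Q$-length at most $\min(i,r-i)\le\lfloor r/2\rfloor$, which is precisely the halving that yields the exponent $1/2$. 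Your write-up of this step is in fact more explicit than the paper's.

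There is one genuine gap, in the abelian-$Q$ clause: your argument passes to eigenvalues of $\phi(q)\otimes 1$ on $H\otimes\mathbb{C}$ and simultaneous triangularisation, which presupposes that $H$ is (free) abelian, an assumption not in the statement. The paper's own proof here is only the one-line remark that ``the expression for $K_m$ may be broken down into powers of the $\phi(q_i)$'', so neither treatment is complete for general $H$; but your eigenvalue detour should be replaced by a word-length argument that does not require linear structure on $H$. Your critical observations---that $K$ genuinely depends on the choice of $\Gamma_Q$ (so ``independent of the generating sets'' can only refer to $\Gamma_H$), and that $\max_{q\in Q}GR(\phi(q))$ is unbounded whenever some $GR(\phi(q))>1$ and must therefore be read as a maximum over generators---are correct and identify real imprecisions in the lemma as stated.
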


\begin{proof}
Note that $K_{mn} \leq K_n^m $ so that $K_{mn}^{1/mn}\leq K_n^{1/n} $ from which the limit exists. The limit is independent of the generating set by expressing one set of generators in terms of another in the usual way. If $Q$ is abelian then the expression for $K_m$ may be broken down into powers of the $\phi(q_i)$.

Let $h_i \in \Gamma_H \cup \Gamma^{-1} _{H}$ and $q_i \in \Gamma_Q
\cup \Gamma^{-1} _Q$.
If $(h_1,q_1) \cdots (h_n, q_n) \in H \rtimes (0)$ we have, using the definition of the semidirect product and that $q_1q_2\ldots q_n$ is the identity in $Q$ (writing $[\quad]$ for the ceiling function),
\begin{eqnarray*}
|(h_1,q_1) \cdots (h_n, q_n)|_H &\leq& |h_1|_H+|\phi(q_1)h_2|_H
+|\phi(q_1) \phi(q_2) h_3|_H + \cdots  +|\phi(q_1) \cdots
\phi(q_{n-1})h_n|_H\\
&=& |h_1|_H+|\phi(q_1)h_2|_H
+|\phi(q_1) \phi(q_2) h_3|_H \\
&& +\cdots + |\phi(q_{n}^{-1})\phi(q_{n-1}^{-1})h_{n-1}|_H +|\phi(q_{n}^{-1})h_n|_H\\
&\leq& K_0 + K_1 + K_2 + \cdots + K_{[n/2]}+ \cdots + K_2 + K_1\\
&\leq& C'[1+(K+\epsilon)+(K+\epsilon)^2+ \cdots +
(K+\epsilon)^{[n/2]}\\
&\leq& C (K + \epsilon)^{[n/2]}
\end{eqnarray*}
Therefore $d_H(h) \leq C(K + \epsilon)^{[n/2]}$ where $ n=
d_\Gamma (h)$, so $\rho(n) \leq C(K + \epsilon)^{[n/2]}$, giving that
$\limsup_{n \to \infty}\rho(n)^{1/n}\le K^{1/2}$.

For the opposite estimate,
letting
\begin{eqnarray*}
k &\equiv& (0,q_1)(0,q_2) \cdots (0,q_r)(h,0)(0,{q_r}^{-1}) \cdots
      (0,{q_1}^{-1})\\
  &=& (\phi(q_1) \phi(q_2) \cdots \phi(q_r) h, q_1 q_2 \cdots q_r
  {q_r}^{-1} \cdots {q_1}^{-1}) \in H \rtimes (0),
\end{eqnarray*}
we get $d_\Gamma (k) = 2r +1$ and $d_H(k) =|\phi(q_1) \cdots
\phi(q_r)h|$. We can find $q_1, \cdots, q_r, h$ such that
$d_H(k) = K_r $, from which it follows that
$K^{1/2} \leq \liminf_{n \to \infty}\rho(n)^{1/n}.$
\end{proof}

\section{Future work and Open problems}
\begin{problem}
Let $\Gamma = \{A \star B; C\}$ be a free product of two finitely generated groups $A$ and $B$ amalgamating a subgroup $C$. Suppose that $\alpha$ is an endomorphism from $\Gamma$ to $\Gamma$   such that $\alpha(A) \subset A$ and $\alpha(B) \subset B$. How can one compute the $GR(\alpha \text{ on } \Gamma)$ using the $GR(\alpha \text{ on } A)$, $GR(\alpha \text{ on } B)$, and $GR(\alpha \text{ on } C)$?
\end{problem}
\begin{problem}
Following definition \ref{falconer}, as $r$ ranges over $1 < r < \infty$ for which classes of groups do
we get just finitely many different $H_r$?
\end{problem}
\begin{problem}
Can one compute the growth rate of endomorphisms in the other classes of groups?
\end{problem}

\noindent  \textbf{Acknowledgement.} Delaram Kahrobaei is grateful
to Goulnara Arzhantseva for   helpful discussions in the early stage
of the project. Further, Professor Arzhantseva originally proposed
to look at the problem of the growth rate of  endomorphisms of
various groups, particularly  of semidirect products of  groups.
Kahrobaei also acknowledges the support of Professor Arzhantseva
through a grant by National Swiss Foundation, which made it possible
for her to visit Arzhantseva and de la Harpe at the University of
Geneva. Delaram Kahrobaei was also supported by a grant from
research foundation of CUNY (PSC-CUNY) and City Tech Foundation.

\section{References}

\noindent [Ba] G. Baumslag, {\it Lecture Notes on Nilpotent Groups},
American Mathematical Society, 1971.

\noindent [B] R. Bowen, {\it Entropy and the fundamental group},in:
The structure of attractors in dynamical systems (Proc. Conf. North
Dakota State Univ., Frago, N.D. 1977), Lecture  notes in Math. {\bf
668} (1978),  21--29.

\noindent [Br 1] M.Bridson, {\it On the growth of groups and
automorphisms},  Internat. J. Algebra Comput.  {\bf 15} (2005),
869--874.

\noindent \lbrack H\rbrack  P. de la Harpe, {\it Topics in Geometric
Group Theory},  Chicago Lecture Notes in Mathematics, 2000.

\noindent \lbrack MS\rbrack  A. Myasnikov and V. Shpilrain, {\it
Some metric  properties of automorphisms of groups}, J. Algebra {\bf
304} (2006), 782--792.

\end{document}